\crefname{equation}{}{}
\crefname{algocf}{Algorithm}{Algorithms}
\crefname{equation}{}{} 
\colorlet{refkey}{orange!20}
\colorlet{labelkey}{blue!30}
\crefname{algocf}{Algorithm}{Algorithms}
\numberwithin{equation}{section}
\newtheorem{theorem}{Theorem}[section]
\newtheorem{proposition}[theorem]{Proposition}
\newtheorem{lemma}[theorem]{Lemma}
\crefname{claim}{Claim}{Claims}
\newtheorem*{question*}{Question}
\theoremstyle{definition}
\newtheorem*{definition*}{Definition}
\theoremstyle{remark}
\newtheorem*{remark}{Remark}
\newcommand{\snorm}[1]{\lVert#1\rVert}
\newcommand{\mb}{\mathbb}
\newcommand{\mbf}{\mathbf}
\newcommand{\mbm}{\mathbbm}
\newcommand{\mc}{\mathcal}
\newcommand{\on}{\operatorname}
\newcommand{\wh}{\widehat}
\newcommand{\wt}{\widetilde}
\title{Optimal and algorithmic norm regularization of random matrices}
\author[A1]{Vishesh Jain}
\address{Simons Institute for the Theory of Computing, Berkeley, CA 94720, USA}
\email{visheshj@stanford.edu}
\author[A2]{Ashwin Sah}
\author[A3]{Mehtaab Sawhney}
\address{Department of Mathematics, Massachusetts Institute of Technology, Cambridge, MA 02139, USA}
\email{\{asah,msawhney\}@mit.edu}
\begin{document}

\begin{abstract}
Let $A$ be an $n\times n$ random matrix whose entries are i.i.d.~with mean $0$ and variance $1$. We present a deterministic polynomial time algorithm which, with probability at least $1-2\exp(-\Omega(\epsilon n))$ in the choice of $A$, finds an $\epsilon n \times \epsilon n$ sub-matrix such that zeroing it out results in $\wt{A}$ with
\[\snorm{\wt{A}} = O(\sqrt{n/\epsilon}).\]
Our result is optimal up to a constant factor and improves previous results of Rebrova and Vershynin, and Rebrova. We also prove an analogous result for $A$ a symmetric $n\times n$ random matrix whose upper-diagonal entries are i.i.d.~with mean $0$ and variance $1$. 
\end{abstract}

\maketitle

\section{Introduction}\label{sec:introduction}
Recall that the operator norm of an $n\times n$ real-valued matrix $A$ is defined as
\[\snorm{A} := \sup_{x \in \mb{S}^{n-1}}\snorm{Ax}_2,\]
where $\snorm{\cdot}_{2}$ denotes the Euclidean norm and $\mb{S}^{n-1}$ denotes the unit sphere in $\mb{R}^{n}$. The operator norm is a fundamental quantity of interest in the non-asymptotic theory of random matrices (see, e.g., \cite{RV10} and the references therein). A classical result of Bai, Krishnaiah, and Yin \cite{BSY88} shows that if the entries of an $n\times n$ random matrix $A$ are i.i.d.~random variables with $0$ mean, unit variance, and bounded fourth moment, then
\[\snorm{A} = (2 + o(1))\sqrt{n}.\]
The finite fourth moment hypothesis is sharp in the sense that for a sequence of $n\times n$ random matrices $A_n$ with entries that are i.i.d.~random variables with mean $0$, unit variance, and infinite fourth moment, Bai, Silverstein, and Yin \cite{BSY88} showed that 
\[\limsup \frac{\snorm{A}}{\sqrt{n}} = \infty \text{ almost surely.}\]

Motivated by works of Feige and Ofek \cite{FO05} and Le, Levina, and Vershynin \cite{LLV17} on the regularization of the norm of adjacency matrices of random graphs, Rebrova and Vershynin \cite{RV18} asked whether enforcing the bound $\snorm{A} = O(\sqrt{n})$ is a ``local problem'' or a ``global problem''. Specifically, they considered an $n\times n$ random matrix $A$ with i.i.d.~entries and asked what assumptions (if any) on the distribution of the entries guarantees that, with high probability, $\snorm{\wt{A}} = O(\sqrt{n})$ for some matrix $\wt{A}$ obtained by modifying $A$ on a small sub-matrix. They showed \cite[Theorem~1.3]{RV18} that this is not possible if the distribution has either non-zero mean or infinite variance; in other words, in this case, there is a ``global problem''. On the other hand, they showed that if the distribution has zero mean and bounded variance, then the problem is ``local''.    
\begin{theorem}[{\cite[Theorem~1.1]{RV18}}]\label{thm:iid-norm-regularization}
Consider an $n\times n$ random matrix $A$ with i.i.d.~entries that have zero mean and unit variance. There exist absolute constants $C_{\ref{thm:iid-norm-regularization}}, c_{\ref{thm:iid-norm-regularization}} > 0$ such that for any $\epsilon\in(0,1/2]$, with probability at least $1-2\exp(-c_{\ref{thm:iid-norm-regularization}}\epsilon n)$, there exists an $\epsilon n \times \epsilon n$ sub-matrix of $A$ such that replacing all of its entries with zero gives a matrix $\wt{A}$ with
\[\snorm{\wt{A}}\le C_{\ref{thm:iid-norm-regularization}}\frac{\log\epsilon^{-1}}{\sqrt{\epsilon}}\sqrt{n}.\]
\end{theorem}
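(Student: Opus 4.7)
The plan is to prove \Cref{thm:iid-norm-regularization} by explicitly choosing the $\epsilon n \times \epsilon n$ submatrix to zero out---the block at the intersection of rows and columns carrying an atypically large entry---and then bounding the operator norm of the resulting matrix via a decomposition of the unit sphere into sparse and spread parts.

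\textbf{Identifying the bad block.} Fix a truncation level $M := C\sqrt{n/\epsilon}$ for a sufficiently large absolute constant $C$, and set $L := \{(i,j) : |A_{ij}| > M\}$. By Chebyshev, $\mb{P}[(i,j) \in L] \leq 1/M^2 = \epsilon/(C^2 n)$, so $\mb{E}|L| \leq \epsilon n/C^2$, and a standard Chernoff bound gives $|L| \leq \epsilon n/2$ except with probability $\exp(-\Omega(\epsilon n))$. Let $I$ (resp.\ $J$) be the set of rows (resp.\ columns) hit by some element of $L$; then $|I|, |J| \leq |L| \leq \epsilon n/2$ and, tautologically, $L \subseteq I \times J$. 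Define $\wt A$ to be $A$ with the block $A[I,J]$ zeroed out (padding $I,J$ to size exactly $\epsilon n$ if desired). Every entry of $\wt A$ then satisfies $|\wt A_{ij}| \leq M$.

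\textbf{Bounding the norm via sphere decomposition.} Write $\snorm{\wt A} = \sup_{x,y \in \mb{S}^{n-1}}\langle \wt A x, y\rangle$ and decompose each of $x, y$ as (sparse)$+$(spread), the sparse part supported on the $\epsilon n$ coordinates of largest magnitude and the spread part having $\ell^{\infty}$-norm at most $(\epsilon n)^{-1/2}$. The bilinear form thus splits into four pieces. The dominant \emph{spread/spread} piece is bounded by an $\epsilon$-net of discretized spread vectors of cardinality $\exp(O(\epsilon n \log \epsilon^{-1}))$: for each fixed net pair, applying Bernstein's inequality to $Z = \sum_{i,j} \wt A_{ij} x_i y_j$ (whose total variance is at most $1$ and whose per-term magnitude is at most $M/(\epsilon n)$) and union bounding gives $|Z| = O(M \log \epsilon^{-1}) = O(\sqrt{n/\epsilon}\,\log \epsilon^{-1})$ uniformly over the net with probability $1 - \exp(-\Omega(\epsilon n))$. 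The remaining three pieces (sparse/sparse and the two cross pieces) reduce to bounds on operator norms of $\epsilon n \times \epsilon n$ and $\epsilon n \times n$ random submatrices of $\wt A$; iid random matrix concentration combined with a union bound over the $\exp(O(\epsilon n \log \epsilon^{-1}))$ possible supports controls these by $O(\sqrt n)$ plus lower order terms, well below the target.

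\textbf{Main obstacle.} The delicate step is the spread/spread Bernstein estimate. Because the original entries $A_{ij}$ are only assumed to have finite variance (not subgaussian tails), truncation at $M = \sqrt{n/\epsilon}$ does not restore subgaussianity, so the argument is forced into Bernstein's subexponential regime, where the deviation scales like $M \cdot (\text{entropy})/(\epsilon n)$ rather than as the square root of that quantity as in the subgaussian case. Balancing this subexponential tail against the $\exp(\Omega(\epsilon n \log \epsilon^{-1}))$ entropy of the spread net is precisely what produces the extra $\log \epsilon^{-1}$ factor in the final bound. A minor secondary issue is that conditioning on $L$ shifts the truncated entries' mean by $O(1/M)$; since this mean matrix has rank $1$ and norm $\lesssim \sqrt{\epsilon n}$, it can simply be subtracted and absorbed. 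Removing the logarithm---as the main theorem of this paper does---requires a genuinely finer chaining or graph-theoretic decomposition that exploits the structure of spread vectors more carefully.
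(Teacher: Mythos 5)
This theorem is not proved in the paper you were handed---it is Theorem~1.1 of Rebrova and Vershynin, cited here only as background. The paper's own work proves the sharper, logarithm-free Theorem~1.2 and its symmetric and algorithmic variants, and does so by the RV18 template: decompose $A$ by entry magnitude into small/medium/large pieces, regularize $\snorm{\cdot}_{2\to\infty}$ of the small piece (\cref{lem:sym-2->infty}), upgrade to $\snorm{\cdot}_{\infty\to 2}$ (\cref{prop:restrictions}), apply Grothendieck--Pietsch (\cref{thm:grothendieck-pietsch}), and treat the medium and large entries with counting and Schur-test arguments (\cref{sec:operator-norm}). Your proposal takes a genuinely different route: one global truncation at $M=\Theta(\sqrt{n/\epsilon})$ followed by a sparse/spread sphere decomposition with an $\epsilon$-net and Bernstein. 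That is an interesting alternative, and your analysis of the spread/spread piece---variance at most $1$, per-term bound $M/(\epsilon n)$, subexponential regime of Bernstein balanced against the $\exp(O(\epsilon n\log\epsilon^{-1}))$ entropy---is correct and does transparently explain where the $\log\epsilon^{-1}$ factor comes from.

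However, there is a real gap in the treatment of the pieces involving a sparse factor, and it is exactly the part of the problem that forces RV18 (and this paper) into the magnitude-split machinery. You assert that the sparse/sparse and sparse/spread pieces are ``controlled by $O(\sqrt{n})$ plus lower order terms'' via ``iid random matrix concentration'' together with a union bound over supports. This cannot be right even at the level of orders of magnitude: after your truncation, $\wt A$ retains entries as large as $M=C\sqrt{n/\epsilon}\gg\sqrt n$, and the sparse part of a unit vector may be a single standard basis vector, so the sparse/sparse piece $\sup_{|S|=|T|=\epsilon n}\snorm{\wt A_{S\times T}}$ is already at least $\max_{(i,j)\notin I\times J}|\wt A_{ij}|$, which is typically of order $M$, not $\sqrt n$. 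More to the point, your Bernstein calculation does not transfer to these pieces: with a sparse test vector there is no $(\epsilon n)^{-1/2}$ gain in $\ell^\infty$, so the per-term bound in Bernstein is $M$ rather than $M/(\epsilon n)$; the linear (subexponential) regime then requires a deviation of order $M\epsilon n\log\epsilon^{-1}$ to survive a union bound over $\exp(\Theta(\epsilon n\log\epsilon^{-1}))$ supports, which is far above the target. Making the sparse pieces work requires either a finer multi-scale (dyadic) decomposition of the test vectors and a chaining argument, or---as in RV18 and in \cref{sec:operator-norm} here---a separate splitting of the retained entries by magnitude and combinatorial control (few nonzeros per row/column, Schur's bound, etc.) before any spectral argument is invoked. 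As written, the proposal's appeal to generic iid concentration does not close this step.
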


The work of Rebrova and Vershynin \cite{RV18} leaves open several natural questions. 

\subsection{Optimal norm regularization}
Let $\epsilon \in (0,1/10)$. It is easily seen \cite[Remark~1.2]{RV18} that for the $n\times n$ random matrix $A$ whose entries are i.i.d.~random variables taking the values $0$ with probability $1-(2\epsilon/n)$ and $\pm \sqrt{n/(2\epsilon)}$ with probability $\epsilon/n$ each, with probability at least $1-2\exp(-c'\epsilon n)$,
\[\snorm{\wt{A}} \gtrsim\sqrt{\frac{n}{\epsilon}}\]
for every matrix $\wt{A}$ obtained by modifying $A$ on a $c\epsilon n \times c\epsilon n$ matrix. Here, $c,c' > 0$ are absolute constants. This example shows that the dependence of the bound on $\snorm{\wt{A}}$ in \cref{thm:iid-norm-regularization} is optimal up to a possible factor of $\log\epsilon^{-1}$. In \cite[Section~11]{RV18}, Rebrova and Vershynin asked whether this factor of $\log\epsilon^{-1}$ is necessary. We show that it is not, thereby obtaining a result which is optimal up to constants.    
\begin{theorem}\label{thm:optimal-iid-norm-regularization}
Consider an $n\times n$ random matrix $A$ with i.i.d.~entries that have zero mean and unit variance. There exist absolute constants $C_{\ref{thm:optimal-iid-norm-regularization}}, c_{\ref{thm:optimal-iid-norm-regularization}} > 0$ such that for any $\epsilon\in(0,1/2]$, with probability at least $1-2\exp(-c_{\ref{thm:optimal-iid-norm-regularization}}\epsilon n)$, there exists an $\epsilon n \times \epsilon n$ sub-matrix of $A$ such that replacing all of its entries with zero gives a matrix $\wt{A}$ with 
\[\snorm{\wt{A}}\le C_{\ref{thm:optimal-iid-norm-regularization}}\sqrt{\frac{n}{\epsilon}}.\]
\end{theorem}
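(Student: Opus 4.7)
The plan is to reduce the theorem to a sharp high-probability bound on the operator norm of a well-chosen truncation of $A$. Let $t := C_0\sqrt{n/\epsilon}$ for a sufficiently large absolute constant $C_0$, and split $A = A_{\mathrm{small}} + A_{\mathrm{big}}$ according to whether $|A_{ij}|\le t$ or not. By Chebyshev, $\Pr[|A_{ij}|>t]\le \epsilon/(C_0^2 n)$, so the number $N$ of heavy entries is stochastically dominated by $\mathrm{Bin}(n^2,\epsilon/(C_0^2 n))$ with mean $\le \epsilon n/C_0^2$. A Chernoff bound then gives $N\le \epsilon n/2$ with probability at least $1-\exp(-\Omega(\epsilon n))$ for $C_0$ large. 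The heavy entries occupy at most $\epsilon n/2$ distinct rows and $\epsilon n/2$ distinct columns; let $R,C$ be these rows and columns padded to size exactly $\epsilon n$, found trivially by scanning $A$ in $O(n^2)$ time.

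By construction $A_{\mathrm{big}}$ is supported inside $R\times C$, so $\wt A$ (the matrix with the $R\times C$ submatrix zeroed out) equals $A_{\mathrm{small}}$ with the $R\times C$ submatrix zeroed out. Writing $\wt A = P_{R^c}A_{\mathrm{small}} + P_R A_{\mathrm{small}} P_{C^c}$, where $P_\cdot$ denotes the coordinate projection, the triangle inequality and $\|P_\cdot\|\le 1$ yield
\[\|\wt A\|\le 2\|A_{\mathrm{small}}\|.\]
Thus it suffices to establish $\|A_{\mathrm{small}}\|\le O(\sqrt{n/\epsilon})$ with probability $\ge 1-\exp(-\Omega(\epsilon n))$. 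The overall rank-one mean correction (subtracting $\mathbb{E}[A_{\mathrm{small}}] J$) contributes at most $n\cdot \sqrt{\epsilon/n}=\sqrt{\epsilon n}$ to the norm, which is absorbed.

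I expect the main obstacle to be the \emph{concentration} of $\|A_{\mathrm{small}}\|$. The expected value is of the correct order, $\mathbb{E}\|A_{\mathrm{small}}\|\lesssim \sqrt n + t = O(\sqrt{n/\epsilon})$, by Bandeira--Boedihardjo--van Handel-type bounds (the entries are independent, mean zero, variance at most $1$, and bounded by $2t$). However, a direct Talagrand inequality only yields deviations of order $t\sqrt{\log(1/\delta)}$, which blows up to $\Theta(n)$ at our target failure probability $\delta=\exp(-\Omega(\epsilon n))$. To close this gap, I would use a refined $\epsilon$-net argument separating the sup $\sup_{x,y\in S^{n-1}}y^TA_{\mathrm{small}}x$ into contributions from \emph{sparse} and \emph{spread} test vectors at the single threshold $k^*\asymp \epsilon n$. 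For spread vectors, a small net combined with Hoeffding/Hanson--Wright concentration on bounded entries gives a bound of $O(\sqrt{n/\epsilon})$. For sparse vectors, one bounds $|y^TA_{\mathrm{small}}x|$ by the Frobenius norm of the $(\mathrm{supp}\,y)\times(\mathrm{supp}\,x)$ restriction of $A_{\mathrm{small}}$; a counting argument over such submatrices (using that their expected Frobenius norm squared equals $|S|\cdot|T|\le (\epsilon n)^2$ and concentration is controlled by the bound $t$) is strong enough to union-bound at failure probability $\exp(-\Omega(\epsilon n))$.

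The crucial advantage over Rebrova--Vershynin is that we never decompose $A_{\mathrm{small}}$ into dyadic \emph{magnitude} scales; that multi-scale decomposition, combined with a union bound over the $\Theta(\log(1/\epsilon))$ scales, is what produced the $\log(1/\epsilon)$ factor in their argument. By instead truncating at the single threshold $t=\Theta(\sqrt{n/\epsilon})$ that exactly balances the covering budget against the sparse-vs-spread dichotomy, one gets a clean $O(\sqrt{n/\epsilon})$ bound. Combined with the polynomial-time construction of $(R,C)$ above, this yields the claimed deterministic algorithm and the optimal-up-to-constants regularization bound.
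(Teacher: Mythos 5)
There is a genuine gap that breaks the argument at its core. You claim that after truncating at the single threshold $t=C_0\sqrt{n/\epsilon}$, the truncated matrix satisfies $\snorm{A_{\mathrm{small}}}=O(\sqrt{n/\epsilon})$ with probability $1-\exp(-\Omega(\epsilon n))$, and that $\mb{E}\snorm{A_{\mathrm{small}}}\lesssim\sqrt n+t$. Both claims are false, and not just at the level of tail concentration --- the \emph{expectation} is already too large. Take the extremal example from \cite[Remark~1.2]{RV18} (quoted in the introduction of this paper): entries equal to $\pm\sqrt{n/(2\epsilon)}$ with probability $\epsilon/n$ each and $0$ otherwise. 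For $C_0\ge 1$ every entry satisfies $|A_{ij}|\le t$, so $A_{\mathrm{small}}=A$, $A_{\mathrm{big}}=0$, and your truncation and your removal of $R\times C$ are both vacuous. But the number of nonzero entries in a row is $\mathrm{Bin}(n,2\epsilon/n)$, and the maximum over $n$ rows is, with probability $\Omega(1)$, of order $k^*\asymp\log n/\log\log n$ when $\epsilon$ is a constant. Hence with probability $\Omega(1)$,
\[
\snorm{A_{\mathrm{small}}}\ \ge\ \snorm{A_{\mathrm{small}}}_{2\to\infty}\ \gtrsim\ \sqrt{\,k^*\cdot n/\epsilon\,}\ \gg\ \sqrt{n/\epsilon}.
\]
So neither the expectation bound nor the high-probability bound you rely on can hold, and the cited Bandeira--Boedihardjo--van Handel estimate does not give $\sqrt n + t$; the boundedness term carries an additional $\mathrm{polylog}(n)$ factor, which is exactly the obstruction here.

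Structurally, the missing idea is that zeroing out only the rows and columns that contain the over-threshold entries cannot work: one must also regularize the row/column $\ell_2$-norms (the $2\to\infty$ and $\infty\to 2$ norms) of the truncated part, and then pass to the operator norm via a Grothendieck--Pietsch factorization after deleting $O(\epsilon n)$ further columns. This is precisely what the paper does: \cref{lem:sym-2->infty} removes $O(\epsilon n)$ columns to tame $\snorm{\cdot}_{2\to\infty}$ (via the dampening weights of \cref{prop:damping-sum}), \cref{prop:restrictions} converts this to a bound on $\snorm{\cdot}_{\infty\to 2}$ (without removing anything further, which is what makes the algorithmic version work), and \cref{thm:grothendieck-pietsch} converts that to a bound on $\snorm{\cdot}$; the medium regimes $M_1,M_2$ and the large regime $L$ then require their own arguments (\cref{lem:medium-low,prop:medium-high,lem:large}). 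Your sparse/spread net and Frobenius-norm counting step also cannot be salvaged as written: there are $\exp(\Theta(\epsilon n\log\epsilon^{-1}))$ choices of size-$\epsilon n$ index sets, while a Bernstein bound for $\snorm{A_{\mathrm{small}}|_{S\times T}}_F^2$ (entries bounded by $t^2=C_0^2n/\epsilon$) only concentrates at scale $\exp(-c\epsilon^3 n)$, which is far too weak to union-bound over those sets for small $\epsilon$. In short, a single magnitude threshold does not suffice; the paper's multi-regime decomposition plus $2\to\infty$ regularization plus Grothendieck--Pietsch is doing essential work that your proposal skips.
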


\subsection{Norm regularization of random symmetric matrices} The regularization results in \cite{FO05,LLV17} were proved for adjacency matrices of random graphs, whereas the main result of \cite{RV18} holds only for random matrices with i.i.d.~entries. Answering a question in  \cite[Section~11]{RV18}, our next result provides a symmetric counterpart of \cref{thm:optimal-iid-norm-regularization}.  

\begin{theorem}\label{thm:sym-norm-regularization}
Consider an $n\times n$ random symmetric matrix $A$ with i.i.d.~entries on and above the diagonal that have zero mean and unit variance. There exist absolute constants $C_{\ref{thm:sym-norm-regularization}}, c_{\ref{thm:sym-norm-regularization}} > 0 $ such that for any $\epsilon\in(0,1/2]$, with probability at least $1-2\exp(-c_{\ref{thm:sym-norm-regularization}}\epsilon n)$, there exists an $\epsilon n \times \epsilon n$ sub-matrix of $A$ such that replacing all of its entries with zero gives a matrix $\wt{A}$ with 
\[\snorm{\wt{A}}\le C_{\ref{thm:sym-norm-regularization}}\sqrt{\frac{n}{\epsilon}}.\]
\end{theorem}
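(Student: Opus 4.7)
The plan is to adapt the proof of \cref{thm:optimal-iid-norm-regularization} to the symmetric setting. Since a symmetric sub-matrix takes the form $S \times S$ for a single index set $S \subseteq [n]$, the argument should be carried out so that ``bad rows'' and ``bad columns'' always coincide, which is natural because for $A = A^T$ the $i$-th row and the $i$-th column carry the same information. The algorithm will likewise be symmetric: peel a single graph rather than running separate row- and column-peelings.

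I would begin by truncating at the moderate threshold $T_0 := C_0 \sqrt{n}$, writing $A = A_{\text{bulk}} + A_{\text{tail}} - \Delta$, where $A_{\text{bulk}}$ keeps entries of magnitude at most $T_0$, $A_{\text{tail}}$ keeps the rest, and $\Delta$ is a symmetric deterministic mean-correction of operator norm $O(\sqrt{n})$. A standard Wigner-type concentration bound (e.g., of Bandeira--van Handel type) gives $\snorm{A_{\text{bulk}}-\mathbb{E}A_{\text{bulk}}} = O(\sqrt{n})$ with failure probability $\exp(-\Omega(n))$, well within the target $O(\sqrt{n/\epsilon})$. For $A_{\text{tail}}$, Chebyshev and a Chernoff bound applied to the independent above-diagonal entries show that the number of positions $(i,j)$ with $|A_{ij}| \in [2^k T_0, 2^{k+1}T_0]$ is at most $O(n/4^k)$ simultaneously for all $k \ge 0$, except with probability $\exp(-\Omega(\epsilon n))$. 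Each magnitude band $k$ defines a symmetric graph $G_k$ on $[n]$, and I would perform a per-band symmetric greedy peeling, iteratively deleting any vertex whose current degree in $G_k$ exceeds a scale-dependent threshold $d_k$. Choosing the $d_k$ so that $\sum_k n/(4^k d_k) \le \epsilon n/2$ guarantees that the total peeled set $S$ has size at most $\epsilon n/2$.

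Zeroing out the symmetric sub-matrix $S \times S$ in $A$ yields $\wt A$ which, restricted to $[n]\setminus S$, consists of the bulk contribution plus a residual tail matrix $R = \sum_k R^{(k)}$, where $R^{(k)}$ is supported on a graph of maximum degree $\le d_k$ with entries of magnitude $\le 2^{k+1}T_0$. The main obstacle is to bound $\snorm{R^{(k)}}$ at each scale so that the sum over $k$ is $O(\sqrt{n/\epsilon})$ without a $\log\epsilon^{-1}$ overhead; a naive per-scale row-sum bound $\snorm{R^{(k)}} \le d_k \cdot 2^{k+1}T_0$ alone is not tight enough when summed. I would combine this with a sharper trace-moment bound exploiting the residual sparsity and bounded-degree structure of $G_k$, tuning the $d_k$ so that the per-scale contributions decay geometrically in $k$ and sum to $O(\sqrt{n/\epsilon})$. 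This parallels the quantitative refinement behind the removal of the $\log\epsilon^{-1}$ factor in \cref{thm:optimal-iid-norm-regularization}. The triangle inequality combining the bulk and tail contributions then yields $\snorm{\wt A} = O(\sqrt{n/\epsilon})$, with overall failure probability $\exp(-\Omega(\epsilon n))$; the only substantive departure from the i.i.d.\ argument is peeling a single symmetric graph in place of separate row- and column-peelings, which is if anything cleaner.
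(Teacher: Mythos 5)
Your proposal is structurally quite different from the paper's: the paper reduces both the i.i.d.\ and symmetric cases to a single statement about i.i.d.\ \emph{upper triangular} matrices, decomposes by magnitude into four pieces $S+M_1+M_2+L$, handles the small part $S$ via a new ``damping'' lemma (\cref{prop:damping-sum}) controlling $\snorm{\cdot}_{2\to\infty}$ followed by a Seginer-type $\snorm{\cdot}_{\infty\to 2}$ bound and Grothendieck--Pietsch factorization, and treats the two medium ranges by separate combinatorial arguments. Unfortunately your sketch has two genuine gaps, and both are located exactly where the paper does its real work.

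First, the claim that after truncating at $T_0 = C_0\sqrt{n}$ one has $\snorm{A_{\mathrm{bulk}}-\mb{E}A_{\mathrm{bulk}}}=O(\sqrt{n})$ with high probability, with \emph{no zeroing out}, is false as stated. The Bandeira--van Handel bound applied to a centered symmetric matrix with per-entry variance at most $1$ and entries uniformly bounded by $C_0\sqrt{n}$ gives $O(\sqrt{n}+\sqrt{n}\sqrt{\log n})=O(\sqrt{n\log n})$, and the extra logarithm cannot be removed: take $\xi=\pm\sqrt{n/2}$ each with probability $1/n$ and $0$ otherwise (mean $0$, variance $1$, bounded by $\sqrt{n}$). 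With probability bounded away from $0$ some row has $\Theta(\log n/\log\log n)$ nonzero entries, so the row $\ell_2$ norm, and hence $\snorm{A_{\mathrm{bulk}}}$, is $\omega(\sqrt{n})$. This is not a technicality — it is precisely the reason the ``small'' part of the matrix still needs $O(\epsilon n)$ rows/columns removed, and the machinery the paper uses for this (the damping lemma feeding into \cref{lem:sym-2->infty}, then \cref{prop:restrictions}, then Grothendieck--Pietsch) is the technical heart of the argument. Your proposal has nothing playing that role.

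Second, for the tail you acknowledge that the naive Schur-type bound $\snorm{R^{(k)}}\le d_k\cdot 2^{k+1}T_0$ does not sum correctly and propose to replace it with an unspecified ``sharper trace-moment bound.'' But the parameter tuning is genuinely delicate: with $O(n/4^k)$ entries at scale $k$ and a peeling budget of $\epsilon n$ vertices, the constraint $\sum_k 1/(4^k d_k)\lesssim\epsilon$ forces $\sum_k d_k\gtrsim 1/\epsilon$ by Cauchy--Schwarz, which already rules out the naive bound, and even an optimistic $\snorm{R^{(k)}}\lesssim 2^kT_0\sqrt{d_k}$ (which is what one hopes for in random sparse models but does not follow from bounded degree alone after conditioning on the peeling event) leaves no slack. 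Removing the $\log\epsilon^{-1}$ from \cref{thm:iid-norm-regularization} is exactly the subtle improvement this paper makes, and it is achieved by a different mechanism: the quantile-based reweighting of \cref{prop:damping-sum} for the small entries, and the careful $M_1/M_2$ split (with \cref{lem:medium-low} and especially \cref{prop:medium-high}, which uses a row-splitting trick and a passage to an independent model) for the medium entries. Your sketch does not contain a substitute for either idea, and the place where you say ``tuning the $d_k$ so that the per-scale contributions decay geometrically'' is precisely where the argument breaks down.

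The one correct observation — that for symmetric matrices one should peel rows and columns together by symmetry — is fine, but in the paper this is handled uniformly via the upper-triangular reduction and \cref{lem:sub-matrix}, so it is not where the difficulty lies.
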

\begin{remark}
The symmetric version of the example given above shows that the dependence of the bound on $\snorm{\wt{A}}$ is optimal in terms of $\epsilon$. 
Also, one can allow for the diagonals to be arbitrary independent random variables with zero mean and unit variance; this is a straightforward modification of the proof and we leave the details to the interested reader.
\end{remark}

\subsection{Constructive norm regularization}
The norm regularization result of Rebrova and Vershynin (\cref{thm:iid-norm-regularization}) is only an existential result and does not provide a way to efficiently find an appropriate $\epsilon n \times \epsilon n$ sub-matrix to zero out. In contrast, the regularization procedures of \cite{FO05, LLV17} are algorithmic in nature. In \cite[Section~11]{RV18}, Rebrova and Vershynin asked whether one can obtain an explicit description of an $\epsilon n \times \epsilon n$ matrix whose removal regularizes the norm. 

This question was the focus of the work of Rebrova \cite{Reb20} who showed \cite[Corollary~1.3]{Reb20} that for an $n\times n$ random matrix $A$ with i.i.d.~entries having a symmetric distribution and unit variance, for any $\epsilon \in (0,1/2]$, and for any $r \ge 1$, there is a deterministic, polynomial time algorithm to zero out an $\epsilon n \times \epsilon n$ sub-matrix in order to obtain $\wt{A}$ satisfying 
\[\snorm{\wt{A}} \lesssim r^{3/2}\cdot \sqrt{\log \log n \cdot \log \epsilon^{-1}} \cdot \sqrt{\frac{n}{\epsilon}}\]
such that the algorithm succeeds with probability $1 - n^{0.1 - r}$ (in the choice of $A$). Compared to the existential regularization results for i.i.d.~matrices discussed earlier, this result requires that the common distribution of the entries is symmetric (as opposed to only mean $0$), loses an additional factor of $\sqrt{\log \log n \cdot \log \epsilon^{-1}}$ in the bound on $\snorm{\wt{A}}$, and moreover, the failure probability (over the choice of $A$) of the regularization procedure is much larger than for the existential results. 

Our final result remedies these shortcomings, thereby providing constructive versions of \cref{thm:optimal-iid-norm-regularization,thm:sym-norm-regularization}

\begin{theorem}\label{thm:algorithmic-norm-regularization}
The $\epsilon n\times\epsilon n$ matrices guaranteed in \cref{thm:optimal-iid-norm-regularization,thm:sym-norm-regularization} can be found via a deterministic polynomial time algorithm which is guaranteed to succeed with probability at least $1-2\exp(-c_{\ref{thm:algorithmic-norm-regularization}}\epsilon n)$ \emph{(}in the choice of $A$\emph{)}. Here, $c_{\ref{thm:algorithmic-norm-regularization}} > 0$ is an absolute constant.  
\end{theorem}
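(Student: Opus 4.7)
The plan is to inspect the proofs of \cref{thm:optimal-iid-norm-regularization,thm:sym-norm-regularization} and verify that the sub-matrix being zeroed out is specified by local combinatorial criteria that can be tested in polynomial time. I expect these proofs to proceed by entry-wise truncation at level $\tau := C\sqrt{n/\epsilon}$, decomposing $A = A^{\leq} + A^{>}$ into a bulk part (entries of magnitude at most $\tau$) and a sparse part. The identification of ``bad'' rows and columns in this template is fully algorithmic, with each step running in $O(n^2)$ time, so the content of the theorem is essentially that such a template is indeed what the existential proofs use.

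Concretely, the algorithm first scans $A$ to identify entries with $|A_{ij}|>\tau$, and lets $I_1$ (resp.\ $J_1$) be the rows (resp.\ columns) containing such entries. Since $\Pr[|A_{ij}|>\tau]\leq \epsilon/(C^{2}n)$ by Chebyshev, a Bernstein bound for $\sum_{i,j}\mathbbm{1}_{|A_{ij}|>\tau}$ gives $|I_1|,|J_1|\leq \epsilon n/16$ with probability $1-\exp(-\Omega(\epsilon n))$. The algorithm then computes the row and column $\ell_{2}$ norms of $A^{\leq}$ and collects into $I_2$ (resp.\ $J_2$) the rows (resp.\ columns) whose norm exceeds $\beta\sqrt{n/\epsilon}$; a tail bound on sums of truncated i.i.d.\ squares gives $|I_2|,|J_2|\leq \epsilon n/4$ at the same order of probability. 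Finally, it outputs $I := I_1\cup I_2$, $J := J_1\cup J_2$ and zeros out the block $A[I,J]$; in the symmetric case, symmetry of $A$ forces $I_1=J_1$ and $I_2=J_2$, and we instead zero $A[K,K]$ for $K := I$. Every step is clearly polynomial-time.

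The correctness bound $\snorm{\wt{A}}=O(\sqrt{n/\epsilon})$ is supposed to be the output of the proofs of \cref{thm:optimal-iid-norm-regularization,thm:sym-norm-regularization}. The main obstacle I anticipate is matching the sub-matrix selected by the explicit combinatorial procedure above with the one used implicitly in those proofs. If the existential arguments identify their bad set via a less local test (say, an $\epsilon$-net argument or a structural union bound on $x^{T}Ay$ over pairs of net vectors), then the spectral estimate must be reverified directly for the concrete $(I,J)$ produced here. This reverification should follow from the fact that the bulk matrix restricted to $([n]\setminus I_2)\times([n]\setminus J_2)$ has all row and column $\ell_{2}$ norms $O(\sqrt{n/\epsilon})$ --- enabling a Seginer-type norm bound --- combined with the observation that on $([n]\setminus I_1)\times([n]\setminus J_1)$ the sparse part $A^{>}$ vanishes identically. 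The failure probability of $2\exp(-c\epsilon n)$ then follows from the individual Bernstein bounds by a union bound, preserving the exponent for suitable choices of $C$ and $\beta$.
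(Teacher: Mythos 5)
Your proposal contains a genuine gap in the spectral step, and it also misses the structure of the paper's argument in a way that matters.

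The critical issue is the claim that controlling all row and column $\ell_2$ norms of the bulk part $A^{\leq}$ restricted to $I^c\times J^c$ ``enables a Seginer-type norm bound.'' This does not work. Deterministically, row and column $\ell_2$ norms give essentially no control on the operator norm: the matrix $\frac{1}{\sqrt{n}}\mbf{1}\mbf{1}^{T}$ has every row and column $\ell_2$ norm equal to $1$, yet operator norm $\sqrt{n}$. Seginer's result is a statement about the \emph{expected} operator norm of a matrix with \emph{independent} mean-zero entries, and once you condition on the data-dependent sets $I_2, J_2$ the restriction is no longer a matrix of independent entries to which Seginer applies. The paper goes around this issue with two tools your proposal omits: \cref{prop:restrictions}, which establishes $\snorm{T_I}_{\infty\to 2}\lesssim \sqrt{n}\snorm{T_I}_{2\to\infty}+n$ \emph{simultaneously for every} $I$ with $|I|\geq n/2$ (so the choice of $I$ can be adversarial, sidestepping conditioning), followed by the Grothendieck--Pietsch factorization (\cref{thm:grothendieck-pietsch}, made algorithmic via Tropp) to pass from an $\infty\to 2$ bound to an operator norm bound at the cost of zeroing $O(\epsilon n)$ more columns. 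Without this chain, there is no route from row/column $\ell_2$ norms to $\snorm{\wt{A}}=O(\sqrt{n/\epsilon})$.

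A second, structural discrepancy: you truncate at the single level $\tau=C\sqrt{n/\epsilon}$, but the paper's $2\to\infty$ regularization (\cref{lem:sym-2->infty}) needs the much tighter almost-sure bound $|T_{ij}|\leq\sqrt{n/\log\epsilon^{-1}}$, which is why the paper introduces the four-way split $T=S+M_1+M_2+L$ and treats the two intermediate ranges $M_1$, $M_2$ with dedicated combinatorial arguments (Schur's $\ell_1$-norm bound after removing dense rows/columns for $M_1$; the more delicate row-splitting and passage to an independent Bernoulli model for $M_2$). Lumping everything below $\tau$ into one bulk part leaves those intermediate entries without any mechanism to control their spectral contribution. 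Your diagnosis of the sparse part $L$ (concentrated in an $O(\epsilon n)\times O(\epsilon n)$ block w.h.p., trivially locatable) is fine, but it is only one of four cases.
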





\subsection{Organization}
As in \cite{RV18}, we follow the natural high-level strategy of decomposing $A$ into a number of parts based on the magnitude of the entries as well as controlling the operator norm of the part with the smallest entries using a version of the Grothendieck-Pietsch factorization theorem. \cref{sec:operator-norm} contains our treatment of ``medium entries'' and ``large entries'' while \cref{sec:2->infty} and \cref{sub:infty->2} contains our treatment of ``small entries''. Finally, these results are combined in \cref{sec:proofs} to prove \cref{thm:optimal-iid-norm-regularization,thm:sym-norm-regularization,thm:algorithmic-norm-regularization}.

\subsection{Notation}\label{sub:notation}
Given an $n\times n$ matrix $A$ and a subset $J\subseteq[n]$, we let $A_J$ denote the matrix obtained from $A$ by zeroing out the columns $J^{c}$. Given an $n\times n$ matrix $A$ and a subset $S\subseteq[n]\times[n]$, we denote by $A_S$ the matrix obtained from $A$ by zeroing out all entries not in $S$. 

For the sake of uniformity in our arguments, we will deduce all statements for $n\times n$ random matrices with i.i.d.~entries above the diagonal and $0$ on and below the diagonal. For brevity, we will refer to such matrices as i.i.d.~random upper triangular matrices and will typically denote them by $T$. 

\subsection{Acknowledgements}
We thank Liza Rebrova for useful discussions.

\section{Controlling \texorpdfstring{$2\to\infty$}{2 to infinity} norm}\label{sec:2->infty}
As in \cite{RV18}, we begin by regularizing $\snorm{\cdot}_{2 \to \infty}$ for the ``small'' part of the matrix. 

\begin{proposition}\label{lem:sym-2->infty}
Let $\epsilon\in(0,1/2)$. Consider an $n\times n$ upper triangular matrix $T$ with i.i.d.~entries satisfying $\mb{E}T_{ij}^2\le 1$ and $|T_{ij}|\le\sqrt{n/\log\epsilon^{-1}}$ almost surely. There exist absolute constants $C_{\ref{lem:sym-2->infty}}, c_{\ref{lem:sym-2->infty}} > 0$ such that with probability at least $1-2\exp(-c_{\ref{lem:sym-2->infty}}\epsilon n)$, there is a subset $J\subseteq[n]$ with $|J|\le C_{\ref{lem:sym-2->infty}}\epsilon n$ for which
\[\snorm{T_{J^c}}_{2\to\infty}\le C_{\ref{lem:sym-2->infty}}\sqrt{n}.\]
Furthermore, this set can be found algorithmically without access to the underlying distribution.
\end{proposition}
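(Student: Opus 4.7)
The $2\to\infty$ norm of a matrix equals the maximum $\ell_2$ norm of its rows, so writing $r_i := \sum_j T_{ij}^2$, the task reduces to finding $|J| \le C\epsilon n$ columns to zero out such that $\sum_{j \notin J} T_{ij}^2 \le Cn$ for every row $i$.

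\smallskip

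\noindent\textbf{Step 1 (per-row Bernstein).} Each $r_i$ is a sum of independent nonnegative random variables with expectation at most $1$ and bounded by $n/\log \epsilon^{-1}$, so Bernstein's inequality gives, for an absolute $c > 0$ and every $K \ge 2$,
\[
\Pr[r_i > Kn] \le \exp(-cK \log \epsilon^{-1}) = \epsilon^{cK}.
\]
The sharp tail $\epsilon^{cK}$ (rather than $\exp(-cK)$) is precisely what the hypothesis $|T_{ij}| \le \sqrt{n/\log\epsilon^{-1}}$ buys us.

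\smallskip

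\noindent\textbf{Step 2 (dyadic accounting of bad rows).} Fix a large absolute constant $K_0$ and partition the bad rows into levels $I_\ell := \{i : r_i \in (K_0 2^\ell n,\, K_0 2^{\ell+1} n]\}$ for $\ell \ge 0$. Because $T$ is upper triangular its rows are mutually independent, so $|I_\ell|$ is stochastically dominated by $\operatorname{Bin}(n,\epsilon^{cK_0 2^\ell})$. A Chernoff bound at each level with target $\epsilon n / (K_0 \cdot 4^\ell \log \epsilon^{-1})$, combined with a union bound over the $O(\log n)$ nonempty levels, yields (for $K_0$ large enough) with probability $1 - 2\exp(-c\epsilon n)$, simultaneously for every $\ell \ge 0$,
\[
|I_\ell| \le \frac{\epsilon n}{K_0 \cdot 4^\ell \log\epsilon^{-1}},
\]
with the convention that the borderline and higher levels have $I_\ell = \emptyset$ outright via a direct expectation bound.

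\smallskip

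\noindent\textbf{Step 3 (algorithm and per-row column budget).} The algorithm computes each $r_i$; for each $i$ with $r_i > K_0 n$ it sorts the entries of row $i$ in decreasing order of $T_{ij}^2$ and adds the top columns to $J$ until the surviving row sum drops below $K_0 n$. Since each $T_{ij}^2 \le n/\log \epsilon^{-1}$, at most $\lceil K_0 \cdot 2^{\ell+1}\log \epsilon^{-1}\rceil$ columns are needed for any row in $I_\ell$. The algorithm is deterministic, uses only $T$ itself, and runs in polynomial time.

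\smallskip

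\noindent\textbf{Step 4 (summing).} On the good event of Step 2,
\[
|J| \le \sum_{\ell \ge 0} |I_\ell|\cdot K_0 \cdot 2^{\ell+1}\log\epsilon^{-1} \le \sum_{\ell\ge 0} \frac{2\epsilon n}{2^\ell} \le 4\epsilon n,
\]
while by construction $\sum_{j\notin J} T_{ij}^2 \le K_0 n$ for every $i$, giving $\snorm{T_{J^c}}_{2\to\infty}\le \sqrt{K_0 n}$.

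\smallskip

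\noindent\textbf{Main obstacle.} The crux is the $4^{-\ell}$ decay of $|I_\ell|$ in Step 2 (as opposed to the naive $2^{-\ell}$ one would read off from $\mb{E}[|I_\ell|] = n\epsilon^{cK_0 2^\ell}$); this shape is essential so that the geometric sum in Step 4 yields $|J| = O(\epsilon n)$ rather than $O(\epsilon n \log \epsilon^{-1})$. Achieving it with failure probability $\exp(-c\epsilon n)$ simultaneously across all levels requires applying Chernoff at a target well below the mean, using the doubly exponential decay $\epsilon^{cK_0 2^\ell}$ afforded by the entry truncation, and carefully handling the borderline level where the target transitions below $1$ (there one replaces the Chernoff tail by a direct union bound on $\{I_\ell \ne \emptyset\}$). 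This level-by-level bookkeeping, which fully exploits the hypothesis $|T_{ij}|\le \sqrt{n/\log\epsilon^{-1}}$, is precisely what removes the spurious $\log\epsilon^{-1}$ factor of \cite{RV18}.
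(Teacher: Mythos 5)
Your approach (bucket rows by the size of $r_i = \sum_j T_{ij}^2$, bound the bucket sizes, and remove the top entries from each bad row) is genuinely different from the paper's, which works via a ``damping'' weight construction rather than a hard row cutoff. Unfortunately your argument has two serious gaps, the first of which I believe is fatal to this route.

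\textbf{Gap in Step 3.} The claim ``since each $T_{ij}^2 \le n/\log\epsilon^{-1}$, at most $\lceil K_0 2^{\ell+1}\log\epsilon^{-1}\rceil$ columns are needed for any row in $I_\ell$'' does not follow. The bound $T_{ij}^2 \le n/\log\epsilon^{-1}$ says each removal decreases $r_i$ by \emph{at most} $n/\log\epsilon^{-1}$; this yields a \emph{lower} bound on the number of columns one must remove, not an upper bound. If the mass of row $i$ is spread across many small entries (e.g., $\Theta(n)$ entries each of constant size), removing $O(2^\ell\log\epsilon^{-1})$ of them removes only $O(2^\ell\log\epsilon^{-1})$ from the sum, and the surviving row sum stays $\Theta(r_i)$, far above $K_0 n$. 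To salvage this one needs an additional probabilistic fact -- roughly, that for every $i$ simultaneously, $\sum_j T_{ij}^2\mathbbm{1}_{T_{ij}^2 \le K_0}= O(n)$ -- but that union bound over $n$ rows costs $\log n$ in the exponent and is not currently addressed.

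\textbf{Gap in Step 2.} Even granting Step 3, the bound $|I_\ell| \le \epsilon n/(K_0 4^\ell \log\epsilon^{-1})$ cannot be enforced for \emph{all} $\ell$ with failure probability $\exp(-\Omega(\epsilon n))$. Once the target drops below $1$ (at $\ell_*$ with $4^{\ell_*}\approx \epsilon n/(K_0\log\epsilon^{-1})$), the ``direct expectation bound'' on $\{I_\ell \ne \emptyset\}$ gives only
\[
\Pr[\exists i: r_i > K_0 2^{\ell_*+1}n] \le n\exp\bigl(-cK_0 2^{\ell_*+1}\log\epsilon^{-1}\bigr) \approx n\exp\bigl(-c'\sqrt{K_0\epsilon n\log\epsilon^{-1}}\bigr),
\]
which for fixed $\epsilon$ and large $n$ is $\exp(-\Theta(\sqrt{n}))$, far larger than the required $\exp(-\Omega(n))$. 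Rare rows with very large $r_i$ are too likely -- and, compounding with the Step 3 issue, a single such row could demand $\Omega(\epsilon n)$ columns to regularize.

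\textbf{How the paper avoids this.} The paper does not bucket by row sum at all. Instead (\cref{prop:damping-sum}) it buckets entries by \emph{size} using dyadic empirical quantiles of $\xi = T_{ij}^2$, estimated from one disjoint half of $T$ to keep the estimate independent of the half being regularized. It then constructs fractional column weights $W_j \in [0,1]$ so that $\sum_j W_j X_j \le Cn$ holds \emph{deterministically} (no per-row union bound required), while $\mathbb{E}\prod_j W_j^{-1} \le 1+\epsilon$. Taking a product over the $n/2$ rows and applying Markov's inequality to $\prod_j V_j^{-1}$ (where $V_j = \prod_i W_{ij}$) yields $|\{j: V_j < e^{-2}\}| \le \epsilon n$ with probability $1-\exp(-\epsilon n)$. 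The multiplicative Chernoff-type structure on $\prod W_j^{-1}$ is what delivers the exponential-in-$\epsilon n$ concentration that a per-row, per-level Chernoff argument cannot reach. Your decomposition by row sum loses exactly the information about \emph{how} the mass of a bad row is distributed across entry sizes, which is what the quantile bucketing retains and exploits.
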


Compared to \cite[Lemma~5.1]{RV18}, the above proposition is valid for i.i.d.~upper triangular matrices (as opposed to only i.i.d.~matrices) and does not lose a factor of $\sqrt{\log\epsilon^{-1}}$ in the bound on $\snorm{\cdot}_{2 \to \infty}$, although this comes at the cost of restricting the bound on $|T_{ij}|$ by a corresponding factor of $1/\sqrt{\log{\epsilon^{-1}}}$.  

The key step in the proof of \cref{lem:sym-2->infty} is the following, which is closely related to \cite[Theorem~4.2]{RV18}. 
\begin{lemma}\label{prop:damping-sum}
Let $\epsilon\in(0,1/2)$ and let $\xi \in [0,n/\log\epsilon^{-1}]$ be such that $\mb{E}\xi\le 1$. Let $X_1,\ldots,X_n$ as well as $Y_1,\ldots,Y_{n^2/8}$ be independent samples of $\xi$. There exist absolute constants $C_{\ref{prop:damping-sum}}, c_{\ref{prop:damping-sum}} > 0$ for which the following holds. For all $n\ge C_{\ref{prop:damping-sum}}\log\epsilon^{-1}$, there exist random variables $W_1,\ldots,W_n\in[0,1]$ depending only on $X_1,\ldots,X_n,Y_1,\ldots,Y_{n^2/8}$ (and not the distribution of $\xi$) such that with probability at least $1-2\exp(-c_{\ref{prop:damping-sum}}n\log\epsilon^{-1})$ over $Y_1,\ldots,Y_{n^2/8}$, we have
\[\quad \quad \quad \quad \quad \quad \quad \quad \quad \quad  \sum_{j=1}^nW_jX_j\le C_{\ref{prop:damping-sum}}n \quad \text{ almost surely over }X_1,\dots, X_n, \text { and}\]
\[1\le\mb{E}_{X_1,\ldots,X_n}\bigg(\prod_{i=1}^nW_i\bigg)^{-1}\le 1+\epsilon.\]
\end{lemma}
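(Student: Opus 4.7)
The plan is to construct the $W_j$ by combining $Y$-informed tail estimates with $X$-dependent quotas, in the spirit of \cite[Theorem~4.2]{RV18} but sharpened to exploit the bound $\xi\le n/\log\epsilon^{-1}$ and so to avoid the $\log\epsilon^{-1}$ loss present there.

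First I will use the $n^2/8$ samples $Y_i$ to estimate the distribution of $\xi$ on a dyadic partition $I_k:=[2^k,2^{k+1})\cap[0,n/\log\epsilon^{-1}]$. A Bernstein bound followed by a union bound over the $O(\log n)$ buckets yields, with probability at least $1-2\exp(-cn\log\epsilon^{-1})$ over the $Y_i$'s, that the empirical frequency $\hat p_k:=\tfrac{8}{n^2}\#\{i:Y_i\in I_k\}$ tracks the true $p_k:=\mathbb{P}[\xi\in I_k]$ on a scale fine enough to set quotas bucket-by-bucket; the matching of exponents comes precisely from comparing $n^2/8$ samples against a target deviation of order $\sqrt{np_k\log\epsilon^{-1}}$ in the implied count.

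Conditioning on this good event, I will assign each bucket $I_k$ a quota $q_k\approx n\hat p_k+O(\sqrt{n\hat p_k\log\epsilon^{-1}}+\log\epsilon^{-1})$ and a damping coefficient $\delta_k\asymp\log\epsilon^{-1}/(n\cdot 2^{k+1})\in(0,1]$. The weights are then defined by processing the indices $j$ in a fixed canonical order and setting $W_j=1$ for the first $q_k$ indices with $X_j\in I_k$, and $W_j=\delta_k$ for subsequent indices landing in $I_k$. The sum bound $\sum_j W_j X_j\le Cn$ then holds deterministically over $X_1,\ldots,X_n$: the in-quota contribution is at most $\sum_k q_k\cdot 2^{k+1}=O(n\,\mathbb{E}\xi)+O(n)$, where the slack term is absorbed via Cauchy--Schwarz together with $\sum_k 2^k p_k\le 1$; while the overflow contribution is at most $\sum_k n\cdot\delta_k\cdot 2^{k+1}=O(\log\epsilon^{-1}\cdot\log n)=O(n)$ for $n\ge C\log\epsilon^{-1}$.

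For the product bound, $\prod_j W_j$ equals $1$ unless some bucket overflows its quota, and the reciprocal contribution from $m$ overflows in $I_k$ is $\delta_k^{-m}\le(n\cdot 2^{k+1}/\log\epsilon^{-1})^m$. By the choice of slack in $q_k$, the probability of $m$ overflows in $I_k$ behaves as a Bernoulli/Poisson tail of order $\exp(-\Omega(m\log\epsilon^{-1}))$, so that the per-bucket contribution to $\mathbb{E}_X\prod_j W_j^{-1}$ is a convergent geometric series. Summing over the $O(\log n)$ buckets (and handling the multinomial dependence between bucket counts via a Poissonization) yields the desired $1+\epsilon$ bound. The hard part will be balancing the overflow penalty $\delta_k^{-1}$ against the overflow probability uniformly across all dyadic scales $k$: for buckets with $np_k\gg\log\epsilon^{-1}$ this is standard Binomial concentration, but for buckets with $np_k\ll 1$ one must rely on the sharper Poisson tail together with the absolute cap $\xi\le n/\log\epsilon^{-1}$ (which is exactly what controls $\delta_k^{-1}$) to ensure the geometric series converges to $1+O(\epsilon)$ rather than diverging.
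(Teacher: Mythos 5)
Your high-level strategy (dyadic partition of the \emph{value} axis, a quota from the $Y$-estimated bucket frequencies, then a fixed per-bucket damping factor $\delta_k$ on the overflow) is a reasonable sharpening of \cite[Theorem~4.2]{RV18}, but it has a genuine gap exactly where you flagged it: the ``hard part'' of balancing the penalty $\delta_k^{-1}$ against the overflow probability does not actually close. The choice $\delta_k\asymp\log\epsilon^{-1}/(n\cdot 2^{k+1})$ is forced by the sum bound (it makes each damped $W_jX_j\lesssim\log\epsilon^{-1}/n$, so $n$ overflow elements contribute $\lesssim\log\epsilon^{-1}$), but then $\delta_k^{-1}=n2^{k+1}/\log\epsilon^{-1}$ depends on the \emph{value} $2^k$ and not on the bucket probability, and for the top bucket it reaches $\Theta\bigl(n^2/(\log\epsilon^{-1})^2\bigr)$. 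The overflow probability, on the other hand, is governed by the quota slack $\sqrt{np_k\log\epsilon^{-1}}+\log\epsilon^{-1}$, which buys you only $\epsilon^{O(1)}$, \emph{independent of $n$}. Concretely, take $\xi$ so that $p_k\approx 2^{-k}$ for the top dyadic bucket $2^k\approx n/\log\epsilon^{-1}$; then $a:=np_k\approx\log\epsilon^{-1}$, the quota is $q_k\approx 3\log\epsilon^{-1}$, and a Poisson/Stirling estimate gives
\[
\delta_k^{-1}\,\mb{P}[\nu_k=q_k+1]\;\asymp\;\frac{n^2}{(\log\epsilon^{-1})^2}\cdot\epsilon^{\,c}\qquad(c\approx 1.3),
\]
which exceeds $\epsilon$ (indeed diverges) as $n\to\infty$ with $\epsilon$ fixed. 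Enlarging the quota to $C\log\epsilon^{-1}$ only improves the probability to $\epsilon^{\Omega(C\log C)}$ while $\delta_k^{-1}$ still grows like $n^2$, so no tuning of $q_k$ saves the bound $1+\epsilon$. Capping $\delta_k^{-1}$ instead breaks the sum bound. In short, a damping coefficient that is a function of the bucket \emph{value} alone cannot work; it must also see the bucket's \emph{occupancy}.

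This is exactly what the paper's construction does differently, and why it succeeds. The paper dyadically partitions \emph{probability mass} via quantiles $q_k=\sup\{r:\mb{P}[\xi\ge r]\ge 2^{-k}\}$, merges quantile levels until each bucket's endpoints are within a factor of two, and then damps \emph{all} elements in bucket $i$ by $w_i=\min(1,\,L2^{-k_i}n/\nu_i)$. The penalty $w_i^{-1}\approx\nu_i/(L2^{-k_i}n)$ is now proportional to the occupancy over its expected occupancy, so the per-step geometric ratio against the binomial tail $\mb{P}[\nu_i=s+1]/\mb{P}[\nu_i=s]\approx n\mb{P}[\text{bucket}]/s\approx 2^{-k_i}n/s$ is a constant $O(1/L)$, uniformly over all buckets $i$, giving a convergent series with total excess $\exp\bigl(2(8e/L)^{L/(8K)}\bigr)-1\le\epsilon$ where $K=1/\log\epsilon^{-1}$. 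Your $Y$-estimation step and negative association argument are both in the right spirit (the paper also uses $\wh q_k$ sandwiched between $q_{k-1}$ and $q_{k+1}$ and uses negative association of the $\nu_i$), but the quota-plus-fixed-damping design is the part that must change.

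Two smaller points. The line ``$\sum_k n\cdot\delta_k\cdot 2^{k+1}=O(\log\epsilon^{-1}\cdot\log n)=O(n)$'' is not correct as written: $\log\epsilon^{-1}\cdot\log n$ is \emph{not} $O(n)$ when $n\ge C\log\epsilon^{-1}$; the sum bound should instead use that the total number of overflowing indices is at most $n$ and $\delta_k\cdot 2^{k+1}=\log\epsilon^{-1}/n$ is the same for every $k$, giving $\le\log\epsilon^{-1}\le n/C$. Finally, observe that any workable fix (e.g.\ making $\delta_k$ proportional to $q_k/\nu_k$, or equivalently to the bucket probability) essentially reproduces the paper's scheme, so the approach does not constitute a genuinely distinct route once the gap is repaired.
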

\begin{proof}
Let $K = 1/\log\epsilon^{-1}$ so that $\xi \in [0,Kn]$ and $n = \Omega(1/K)$. Let $t = \lceil \log_{2}(Kn)\rceil$. Let $q_{-1} = q_0 = 0, q_{t+1} = Kn$, and for $k \in [t]$, let
\[q_k = \sup\{r \ge 0: \mb{P}[\xi \ge r] \ge 2^{-k}\}.\]
Let $\wh{\mb{P}}$ denote the empirical measure generated by $Y_1,\dots, Y_{n^{2}/8}$. Let $\wh{q}_0 = 0, \wh{q}_{t+1} = 2Kn$, and for $k \in [t]$, let
\[\wh{q}_k = \sup\{r \ge 0: \wh{\mb{P}}\left[[r, \infty)\right] \ge 2^{-k}\}.\]
Let $\mc{G}$ denote the event, measurable with respect to $Y_1,\dots, Y_{n^2/8}$, that
\[q_{k-1} \le \wh{q}_k \le q_{k+1}\quad \forall k \in [t].\]
Then, it follows from a staightforward application of the Chernoff bound and the union bound that
\[\mb{P}_{Y_1,\dots, Y_{n^2/8}}[\mc{G}] \ge 1-\sum_{k=1}^{t}\exp\left(-\Omega\left(\frac{n^2}{2^k}\right)\right) \ge 1-2\exp(-\Omega(n/K)).\]

Now, fix a realisation of $Y_1,\dots, Y_{n^2/8}$ and the associated quantities $\wh{q}_0,\dots,\wh{q}_{t+1}$. Let $k_0 = 0$ and inductively define
\[k_{i+1} = \max\{k_{i} + 1, \max\{j \in (k_{i}, {t+1}] : \wh{q}_j < 2 \wh{q}_{k_{i}}\}\}.\]
Here, the maximum of the empty set is $-\infty$. Let $\ell$ denote the first index for which $k_\ell = t+1$. Note that for all $i \in \{0,1,\dots, \ell-1\}$,
\[2^{-k_i} \wh{q}_{k_{i+1}} \lesssim 2^{-k_i}\wh{q}_{k_i} + 2^{-k_{i+1}}\wh{q}_{k_{i+1}}.\]

Let $L \ge 1$ be a sufficiently large absolute constant to be specified later. For a realisation of $X_1,\dots, X_n$ and for $i \in \{0,1,\dots, \ell-1\}$, let
\[\nu_i = \#\{j \in [n]: X_j\in[\wh{q}_{k_i},\wh{q}_{k_{i+1}})\} \text{ and } w_i = \min\left(1, \frac{L2^{-k_{i}}n}{\nu_i}\right).\]
Also, for $j \in [n]$, we let $i(j)$ denote the unique (by construction) index $i \in \{0,1,\dots\ell-1\}$ for which $X_j \in [\wh{q}_{k_i}, \wh{q}_{k_{i+1}})$. For $j \in [n]$ and $i\in \{0,1,\dots,\ell-1\}$, let
\[W_j = w_{i(j)} \text{ and } Z_i = \prod_{j: i(j) = i}W_j^{-1}.\]
Then, on the event $\mc{G}$, we have
\begin{align*}
    \sum_{j=1}^{n}W_j X_j &= \sum_{i=0}^{\ell-1} \sum_{j: i(j) = i}W_j X_j \le \sum_{i=0}^{\ell-1}Ln\cdot 2^{-k_i}\wh{q}_{k_{i+1}} \lesssim \sum_{k=0}^{t+1}Ln\cdot 2^{-k}\wh{q}_{k} \\
    &\lesssim Ln + \sum_{k=1}^{t} L n \cdot 2^{-k}q_{k+1}\lesssim Ln\left(1 + \mb{E}\xi\right) \lesssim Ln. 
\end{align*}
Moreover, on the event $\mc{G}$, we have for any $i \in \{0,1,\dots, \ell-1\}$ that
\begin{align*}
    \mb{E}_{X_1,\dots, X_n}[Z_i]  
    &\le 1 + \mb{E}\left[\left(\frac{\nu_i}{L2^{-k_{i}}n}\right)^{\nu_i}\mbm{1}_{\nu_i > L2^{-k_i}n}\right]\\
    &\le 1 + \sum_{s > L2^{-k_i}n}\left(\frac{s}{L2^{-k_i}n}\right)^{s}\mb{P}[\nu_i = s]\\
    &\le 1 + \sum_{s > L2^{-k_i}n}\left(\frac{s}{L2^{-k_i}n}\right)^{s}\binom{n}{s}\mb{P}[\xi \ge \wh{q}_{k_{i}}]^{s}\\
    &\le 1 + \sum_{s > L2^{-k_i}n}\left(\frac{e}{L2^{-k_i}}\right)^{s}\mb{P}[\xi \ge q_{k_i - 1}]^{s}\\
    &\le 1 + \sum_{s > L2^{-k_i}n}\left(\frac{4e}{L}\right)^{s} \le \exp\left(2\left(\frac{8e}{L}\right)^{\frac{Ln}{2^{2 + k_i}}}\right),
\end{align*}
for a sufficiently large absolute constant $L$. 
Then, since the random variables $|\nu_0|,\dots, |\nu_{\ell-1}|$ are negatively associated and since $Z_i$ is an increasing function of $|\nu_i|$, it follows that
\begin{align*}
    \mb{E}_{X_1,\dots, X_n}\left[\prod_{j=1}^{n} W_j^{-1}\right] 
    &= \mb{E}\left[\prod_{i=0}^{\ell-1}Z_i\right]
    \le \prod_{i=0}^{\ell-1}\mb{E}[Z_i]\\
    &\le \exp\left(2\left(\frac{8e}{L}\right)^{L/(8K)}\right) \le 1+\epsilon
\end{align*}
for a sufficiently large absolute constant $L$, where the final inequality uses that $K = 1/\log\epsilon^{-1}$.
\end{proof}

We can now quickly deduce \cref{lem:sym-2->infty}.

\begin{proof}[Proof of \cref{lem:sym-2->infty}]
We may assume that $n = \Omega(\epsilon^{-1})$ as otherwise, the desired probability bound is negative. Moreover, by adding an extra row and column of zeros (if necessary), we may assume that $n$ is even. Recall that the diagonal entries of $T$ are $0$. Consider the $n/2 \times n/2$ matrices
\[N_{ij} = T_{i, n/2 + j}, \text{ and }\]
\[N'_{ij} = 
\begin{cases}
T_{i+n/2, j + n/2} &\text { if } j > i \text{ and } i < n/2 \\
T_{n/2 - i, n/2 - j + 1} &\text{ if } j \le i \text{ and } i < n/2\\
0 &\text { if } i = n/2.
\end{cases}
\]
Then, it is straightforward to see that if we can find $O(\epsilon n)$ columns in $N$ and $N'$ to zero-out such that the resulting matrices have $\snorm{\cdot}_{2 \to \infty}$ norm $O(\sqrt{n})$, then the same is true for $T$ and in fact, the choice of columns used for $N$ and $N'$ correspond to an obvious choice of columns for $T$.

By taking the union bound, it suffices to show the following: with probability at least  $1 - 2\exp(-\Omega(n\log{\epsilon^{-1}}))$ over the realisation of $N'$, with probability at least $1 - 2\exp(-\Omega(\epsilon n))$ over the realization of $N$, we can algorithmically find a set of $O(\epsilon n)$ columns of $N$ to zero-out such that the resulting matrix has $\snorm{\cdot}_{2\to \infty}$ norm $O(\sqrt{n})$. This follows from a direct application of \cref{prop:damping-sum}. 

Indeed, we treat the entries $(N'_{ij})^{2}$ for $i < n/2$ as the i.i.d.~samples $Y_1,\dots, Y_{n^2/8}$. Then, by \cref{prop:damping-sum}, with probability at least $1-2\exp(-\Omega(n\log\epsilon^{-1}))$ over the realization of $N'$, the following holds. For any $i \in [n/2]$, we can find $W_{i1},\dots, W_{i,n/2} \in [0,1]$ depending only on $N_{i1},\dots, N_{i,n/2}$ and $N'$ such that 
\[\sum_{j=1}^{n/2} W_{ij}N_{ij}^{2} \le C_{\ref{prop:damping-sum}}n \text{ and }\]
\[1\le\mb{E}_{N_{i1},\ldots,N_{i,n/2}}\bigg(\prod_{j=1}^{n/2}W_{ij}\bigg)^{-1}\le\exp(\epsilon).\]
Let 
\[V_j = \prod_{i=1}^{n/2}W_{ij}, \quad j \in [n/2].\]
Then, $W_{ij} \le V_j$ for all $i \in [n/2]$ so that
\begin{equation}
\label{eqn:damp}
\sum_{j=1}^{n/2} V_j N_{ij}^{2} \le C_{\ref{prop:damping-sum}}n \quad \forall i \in [n/2]\text{ and }
\end{equation}
\begin{equation}
\label{eqn:large-weights}
1\le\mb{E}_N\bigg(\prod_{j=1}^{n/2}V_j\bigg)^{-1}\le\exp(\epsilon n).
\end{equation}
Let $J = \{j \in [n/2]: V_j < e^{-2}\}$. By \cref{eqn:large-weights} and Markov's inequality, it follows that $|J| \le \epsilon n$ with probability at least $1-\exp(-\epsilon n)$ over the choice of $N$. Moreover, by \cref{eqn:damp}, it follows that for all $i \in [n/2]$, $\sum_{j \in J^{c}}N_{ij}^{2} \le e^{2}\cdot C_{\ref{prop:damping-sum}}n$, which completes the proof. 
\end{proof}

\section{Controlling \texorpdfstring{$\infty\to 2$}{infinity to 2} and operator norms}\label{sec:infty->2}
\subsection{\texorpdfstring{$\infty\to 2$}{infinity to 2} norm}\label{sub:infty->2}
In this subsection, we will show how to turn a bound on $\snorm{\cdot}_{2\to\infty}$ into a bound on $\snorm{\cdot}_{\infty\to 2}$. In contrast to the corresponding step in \cite{RV18}, we will be able to accomplish this \emph{without} removing any additional columns, which will be useful for our algorithmic regularization procedure. The goal of this subsection is to prove the following Seginer-type \cite{Seg00} result for the $\snorm{\cdot}_{\infty \to 2}$ norm. 
\begin{proposition}
\label{prop:restrictions}
Let $T$ be an $n\times n$ upper triangular matrix with i.i.d.~entries of mean $0$ and variance at most $1$. There exists an absolute constant $C_{\ref{prop:restrictions}} > 0$ such that with probability at least $1 - 4^{-n}$, the following holds: 
\[\snorm{T_I}_{\infty \to 2} \le C_{\ref{prop:restrictions}}(\sqrt{n}\snorm{T_I}_{2\to \infty} + n) \text{ for all } I \subseteq [n], |I| \ge n/2.\]
\end{proposition}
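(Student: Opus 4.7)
The plan is to prove the bound pointwise for each pair $(\epsilon, I)$ with probability at least $1 - 16^{-n}$ over $T$, and then apply a union bound over the at most $2^n \cdot 2^n = 4^n$ such pairs with $\epsilon \in \{\pm 1\}^n$ and $I \subseteq [n]$ having $|I| \ge n/2$. For fixed $(\epsilon, I)$, I would first write
\[\|T_I \epsilon\|_2^2 = \sum_{i=1}^n R_i^2, \qquad R_i := \sum_{j \in I,\,j>i} T_{ij}\epsilon_j,\]
and make the crucial observation that the $R_i$'s are independent, since they depend on disjoint rows of the upper-triangular matrix $T$.

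Next I would truncate at level $\tau := \sqrt{n}$, splitting $T_{ij} = T^s_{ij} + T^l_{ij}$ with $T^s_{ij} := T_{ij}\mathbbm{1}_{|T_{ij}|\le\tau}$. By Chebyshev each entry is ``large'' with probability at most $1/n$, and a Chernoff bound applied to the indicator variables gives that the total count of large entries is at most $C_1 n$ with probability at least $1 - \tfrac{1}{2}16^{-n}$. On this event, row-by-row Cauchy--Schwarz yields $(R^l_i)^2 \le k_i\,\|(T_I)_i\|_2^2 \le k_i\,\|T_I\|_{2\to\infty}^2$ (with $k_i$ the number of large entries in row $i$ restricted to $I$), and summing gives $\|T^l_I\epsilon\|_2^2 \le C_1 n\,\|T_I\|_{2\to\infty}^2$, which accounts for the first term of the target bound.

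For the small part, I would center it as $T^s = \tilde T + \mu U$, where $U$ is the strictly upper-triangular all-ones matrix and $\mu := \mathbb{E}T^s_{11}$ satisfies $|\mu| \le \mathbb{E}T_{11}^2/\tau \le 1/\sqrt{n}$ by the mean-zero assumption and Cauchy--Schwarz on the tail. The mean contribution is then $\|\mu U_I \epsilon\|_2 \le |\mu|\cdot n^{3/2} \le n$, absorbed into the additive $n$ term. The centered bounded part $\tilde T$ has iid mean-zero entries with variance at most $1$ and bound $|\tilde T_{ij}|\le 2\tau$; it remains to show $\|\tilde T_I\epsilon\|_2 \le Cn$ with probability at least $1 - \tfrac{1}{2}16^{-n}$. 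Since $\|\tilde T_I \epsilon\|_2^2 = \sum_i \tilde R_i^2$ is an independent sum with expectation $O(n^2)$, the plan is to invoke a Bernstein-type concentration for the associated block-diagonal quadratic form in bounded iid variables (using the Bernstein MGF bound $\mathbb{E}\exp(s\tilde R_i) \le \exp(ns^2)$ valid for $|s|\lesssim 1/\sqrt{n}$, which crucially exploits the variance bound rather than the larger sub-Gaussian norm of bounded entries), and then combine the three pieces via the triangle inequality.

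The most delicate step will be the concentration bound for $\|\tilde T_I\epsilon\|_2$. Each $\tilde R_i$ is sub-Gaussian with variance proxy $\asymp n$ only in the bandwidth $|t|\lesssim \sqrt{n}$, becoming merely sub-exponential beyond, so $\tilde R_i^2$ has infinite moment generating function and a naive Chernoff argument does not suffice. One circumvents this by either (i) applying a Bernstein--Hanson--Wright-type inequality for polynomial chaoses in bounded iid random variables (in the spirit of Adamczak--Wolff) that is tuned to the variance rather than to the sub-Gaussian norm, or (ii) running a Chernoff argument directly on the independent sum $\sum_i \tilde R_i^2$ via a truncated Gaussian decoupling that combines the valid Bernstein MGF on the central region with the crude a.s.\ bound $\tilde R_i^2 \le (2n\tau)^2 = O(n^3)$ for the tail region. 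Getting the constants to land at probability $16^{-n}$ rather than something like $\exp(-\sqrt{n})$ is the real technical content of this subsection.
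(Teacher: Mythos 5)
Your reduction via union bound over $(\epsilon, I)$, the truncation at $\tau=\sqrt{n}$, the Cauchy--Schwarz control of the ``large'' part in terms of $\snorm{T_I}_{2\to\infty}$, and the mean-shift bound $\snorm{\mu U_I\epsilon}_2\le n$ are all correct, and the observation that the $R_i$ are independent across rows is the right structural feature of the upper-triangular model to exploit. But the central remaining claim --- that $\snorm{\wt{T}_I\epsilon}_2 \le Cn$ holds with failure probability at most $\tfrac{1}{2}16^{-n}$ for each fixed $(\epsilon, I)$ --- is not merely technically delicate, it is \emph{false}, and neither option (i) nor option (ii) in your sketch can repair it. Take the legitimate distribution $\mb{P}[\xi=\pm\sqrt{n}]=1/(2n)$, $\mb{P}[\xi=0]=1-1/n$, which has mean $0$, variance $1$, and is untouched by your truncation (so $\wt{T}=T^s=T$, $\mu=0$). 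Fix $\epsilon = \mbf{1}$ and $I=[n]$. The event that row $1$ has $m = K\sqrt{n}$ nonzero entries, all equal to $+\sqrt{n}$, has probability on the order of $\binom{n}{m}(2n)^{-m} \approx (e/(2K\sqrt{n}))^{K\sqrt{n}} = \exp(-\Theta(\sqrt{n}\log n))$, which for large $n$ is \emph{vastly} larger than $16^{-n}$. On that event $\wt{R}_1 = m\sqrt{n} = Kn$, so $\snorm{\wt{T}_I\epsilon}_2 \ge Kn$ and the claimed bound fails for any fixed constant $C$. This is exactly the sub-exponential tail of $\wt{R}_i$ you flagged as worrisome: it is not a constants problem, it is a genuine obstruction, because a single bad row with probability $\exp(-\tilde{O}(\sqrt{n}))$ breaks any bound of the form $\snorm{\wt{T}_I\epsilon}_2 \le Cn$ that does not keep $\snorm{T_I}_{2\to\infty}$ on the right-hand side. (Note the Proposition itself survives on this event precisely because $\snorm{T_I}_{2\to\infty}\ge\sqrt{mn}$ is then large.)

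The fix is to retain the $\snorm{T_I}_{2\to\infty}$ term when handling the small entries as well, and the paper's route makes this clean. It uses a symmetrization lemma (\cref{lem:signs}): conditioning on the \emph{magnitudes} $|A_{ij}|$ and randomizing only the signs, each coordinate $\sum_j \epsilon_{ij}A_{ij}x_j$ is Hoeffding sub-Gaussian with variance proxy $\snorm{A_i}_2^2 \le \snorm{A}_{2\to\infty}^2$ --- a bound keyed to the actual realized row norms, not to the a priori entrywise variance or worst-case bound. Summing over $n$ independent such coordinates and union-bounding over $x\in\{\pm 1\}^n$ gives $\snorm{\wt{A}}_{\infty\to 2}\le\sqrt{2tn}\snorm{A}_{2\to\infty}$ with failure probability $2^n\exp(-ctn)$. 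One then passes from $T_I$ to a symmetrized $T_I - T_I'$, and from there back to $T_I$ by conditioning on a moderately small ``anchor'' event $\mc{F}(\delta)$ that $T_I'$ is already regular (\cref{lem:low-prob-2}). If you want to preserve your row-by-row decomposition, the right move is to symmetrize first ($T\to T-T'$), condition on the entry magnitudes, and apply Hoeffding to each $\wt{R}_i$ with the data-driven variance proxy $\snorm{(T-T')_{i,I}}_2^2$ --- at which point you will have effectively reconstructed the paper's argument.
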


The proof of \cref{prop:restrictions} will be presented at the end of this subsection following a series of preparatory lemmas. We begin with the following tight relationship between $\snorm{\cdot }_{2\to \infty}, \snorm{\cdot }_{\infty \to 2}$, and $\snorm{\cdot \mbf{1}}_{2}$, proved in \cite{RV18} for a completely i.i.d.~matrix.
\begin{lemma}[{\cite[Lemmas~6.3 and 6.4]{RV18}}]\label{lem:concentration-1}
Let $A$ be a random $n\times n$ matrix with i.i.d.~entries. Then 
\[\mb{E}\snorm{A}_{\infty\to 2}\le C_{\ref{lem:concentration-1}}(\sqrt{n}\mb{E}\snorm{A}_{2\to\infty}+\mb{E}\snorm{A\mbf{1}}_2),\]
and with probability at least $1-e^{-n}$, 
\[\snorm{A}_{\infty\to 2}\le C_{\ref{lem:concentration-1}}(\sqrt{n}\mb{E}\snorm{A}_{2\to\infty}+\mb{E}\snorm{A\mbf{1}}_2).\]
\end{lemma}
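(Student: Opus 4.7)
The plan is to first establish the expectation bound via a symmetrization / Rademacher sign-flip / union-bound argument, and then upgrade to the high-probability version through a convex concentration inequality applied to $A \mapsto \snorm{A}_{\infty \to 2}$.

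For the expectation bound, I would begin with the convexity of $x \mapsto \snorm{Ax}_2$ on $[-1,1]^n$: the defining supremum is attained at an extreme point, so $\snorm{A}_{\infty \to 2} = \max_{x \in \{\pm 1\}^n} \snorm{Ax}_2$. Writing $x = 2\mbf{1}_S - \mbf{1}$ for $S = \{i : x_i = 1\}$ gives
\[\snorm{A}_{\infty \to 2} \le 2 \max_{S \subseteq [n]} \snorm{A\mbf{1}_S}_2 + \snorm{A\mbf{1}}_2,\]
reducing the task to bounding $\mb{E}\max_S \snorm{A\mbf{1}_S}_2$. Introducing an independent copy $A'$ and applying Jensen absorbs the deterministic shift $\snorm{\mb{E}A\mbf{1}_S}_2 = |\mu||S|\sqrt{n} \le \snorm{\mb{E}A\mbf{1}}_2 \le \mb{E}\snorm{A\mbf{1}}_2$ into the target RHS, leaving $\mb{E}\max_S \snorm{B\mbf{1}_S}_2$ to control, where $B := A - A'$ has i.i.d.\ symmetric entries. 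The key step is a Rademacher sign-flip: since $B$ has i.i.d.\ symmetric entries, $B \stackrel{d}{=} \xi \odot B$ entrywise for an independent family of i.i.d.\ Rademacher signs $\xi_{ij}$. Conditioning on $B$, each coordinate $\bigl((\xi \odot B)\mbf{1}_S\bigr)_i = \sum_{j \in S}\xi_{ij}B_{ij}$ is a Rademacher sum that is sub-Gaussian with parameter $\sqrt{\sum_{j \in S}B_{ij}^2} \le \snorm{B}_{2 \to \infty}$, so its square is sub-exponential with parameter $\snorm{B}_{2\to\infty}^2$. Summing the $n$ independent rows and invoking Bernstein yields
\[\mb{P}_\xi\!\left[\snorm{(\xi \odot B)\mbf{1}_S}_2^2 > C_1 n \snorm{B}_{2\to\infty}^2\right] \le e^{-c_1 n},\]
and a union bound over the $2^n$ subsets $S$ (with $C_1$ chosen large) produces $\max_S\snorm{(\xi \odot B)\mbf{1}_S}_2 \lesssim \sqrt{n}\,\snorm{B}_{2\to\infty}$ with probability $\ge 1 - e^{-n}$ over $\xi$. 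Taking expectations and using $\mb{E}\snorm{B}_{2\to\infty} \le 2\,\mb{E}\snorm{A}_{2\to\infty}$ completes the expectation bound.

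For the high-probability inequality, I would invoke Talagrand's convex concentration inequality for convex $1$-Lipschitz functions of independent bounded random variables: $f(A) := \snorm{A}_{\infty \to 2}$ is convex as a supremum of linear functionals $A \mapsto y^T A x$, and is $1$-Lipschitz with respect to the entrywise Hilbert--Schmidt norm because perturbing any $A_{ij}$ by $\delta$ changes $f(A)$ by at most $|\delta|$. A preliminary truncation of the entries at a level comparable to $\mb{E}\snorm{A}_{2\to\infty}$ (with the tail contribution handled by a direct union bound on $\max_{ij}|A_{ij}|$) reduces to the bounded case, where Talagrand's inequality gives $\mb{P}[f(A) > 2\,\mb{E}f(A)] \le e^{-n}$; combining with the expectation bound then supplies the advertised RHS. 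The main obstacle is the Bernstein step above: a naive coordinate-wise deterministic bound $|Z_i| \le \sqrt{n}\,\snorm{B}_{2\to\infty}$ would only give $\snorm{(\xi \odot B)\mbf{1}_S}_2 \le n\snorm{B}_{2\to\infty}$ and lose a factor of $\sqrt{n}$, so it is essential to exploit sub-exponential (rather than merely sub-Gaussian) concentration of $\snorm{(\xi \odot B)\mbf{1}_S}_2^2$ around its mean $\snorm{B_{:,S}}_F^2 \le n\snorm{B}_{2\to\infty}^2$ in order to recover the correct rate compatible with the $2^n$-sized union bound.
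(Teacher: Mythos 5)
The paper itself gives no proof of this lemma---it is imported directly from \cite[Lemmas~6.3 and 6.4]{RV18}---so your attempt has to stand on its own. The first half (the expectation bound) is essentially correct and follows the standard route: reduce $\snorm{A}_{\infty\to 2}$ to $\max_{S\subseteq[n]}\snorm{A\mbf{1}_S}_2$ plus $\snorm{A\mbf{1}}_2$, symmetrize with an independent copy, and, conditionally on the symmetrized matrix, run a Bernstein bound for each of the $2^n$ subsets followed by a union bound; this is exactly the mechanism behind \cref{lem:signs}, and apart from routine bookkeeping (e.g.\ the crude bound $\max_S\snorm{(\xi\odot B)\mbf{1}_S}_2\le n\snorm{B}_{2\to\infty}$ needed to integrate over the exceptional event in $\xi$) it goes through.

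The high-probability half has a genuine gap. First, $\snorm{\cdot}_{\infty\to 2}$ is not $1$-Lipschitz with respect to the Frobenius norm: the correct constant is $\sqrt{n}$ (take $\Delta=n^{-1}\mbf{1}\mbf{1}^T$, which has Frobenius norm $1$ but $\snorm{\Delta}_{\infty\to 2}=\sqrt{n}$). Your justification---that changing a single entry by $\delta$ changes the norm by at most $|\delta|$---only gives Lipschitzness in the entrywise $\ell_1$ metric, which is not the hypothesis of Talagrand's convex concentration inequality. This is not cosmetic: with the correct constant $\sqrt n$ and entries bounded by $K$, Talagrand at failure probability $e^{-n}$ only controls deviations of order $nK$, which for $K\asymp\mb{E}\snorm{A}_{2\to\infty}$ overshoots the allowed $\sqrt{n}\,\mb{E}\snorm{A}_{2\to\infty}$ by a factor of $\sqrt n$, and shrinking $K$ to the single-entry scale makes the discarded tail non-negligible. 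Second, the truncation step has nothing to stand on: the hypothesis is bare i.i.d.\ entries, with no tail or boundedness assumption, so there is no available bound on $\mb{P}[\max_{ij}|A_{ij}|>K]$ for any $K$, and entries above your truncation level occur with probability far exceeding $e^{-n}$ while contributing arbitrarily much to $\snorm{A}_{\infty\to 2}$ and very little to $\mb{E}\snorm{A}_{2\to\infty}$. Concretely, for entries equal to $\pm p^{-1/2}$ with probability $p/2$ each, $p=e^{-n/2}$, and $0$ otherwise (mean zero, unit variance), the right-hand side is $O(n^{3/2})$, yet $\snorm{A}_{\infty\to 2}\ge p^{-1/2}=e^{n/4}$ with probability roughly $n^2e^{-n/2}\gg e^{-n}$. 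So no argument that merely establishes concentration of $\snorm{A}_{\infty\to 2}$ around its mean can deliver the second inequality in this generality; one needs either additional restrictions on the entries (in the paper's actual application they are bounded by $\sqrt{n/\log\epsilon^{-1}}$ after truncation) or a different mechanism, such as the independent-copy symmetrization combined with \cref{lem:signs} and a low-probability regularity event, which is precisely how the paper handles the analogous high-probability statement in the proof of \cref{prop:restrictions}.
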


We establish an analogous version for random upper triangular matrices with i.i.d.~entries. 

\begin{lemma}\label{lem:concentration-2}
Let $T$ be a random upper triangular $n\times n$ matrix with i.i.d.~entries, and let $J$ be a subset of columns with $|J|\ge n/2$. Then with probability at least $1-2e^{-n}$,
\[\snorm{T_J}_{\infty\to 2}\le C_{\ref{lem:concentration-2}}(\sqrt{n}\mb{E}\snorm{T_J}_{2\to\infty}+\mb{E}\snorm{T_J\mbf{1}}_2).\]
\end{lemma}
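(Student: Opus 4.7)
The plan would be to adapt the proof of \cref{lem:concentration-1} (Lemmas~6.3 and~6.4 of \cite{RV18}) from the fully i.i.d.~setting to the upper triangular setting. The key observation is that the argument in \cite{RV18}---symmetrization of mean-zero entries, followed by a Rademacher process estimate and a Talagrand-type concentration inequality---requires only the \emph{independence} of the matrix entries, not their identical distribution. Since the entries of $T$ above the diagonal are i.i.d.~mean-zero (and those on or below are deterministically zero), the entries of $T_J$ are mutually independent with mean zero, so the same argument should apply essentially verbatim and yield a bound of the same form.

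Concretely, the first step is to establish the expectation bound $\mb{E}\snorm{T_J}_{\infty\to 2}\le C(\sqrt{n}\mb{E}\snorm{T_J}_{2\to\infty}+\mb{E}\snorm{T_J\mbf{1}}_2)$. One expands
\[\snorm{T_J}_{\infty\to 2}^2=\sup_{x\in\{-1,1\}^J}\sum_{i=1}^n\bigg(\sum_{j\in J,\,j>i}T_{ij}x_j\bigg)^2,\]
and symmetrizes each inner sum by introducing independent Rademacher signs $\varepsilon_{ij}$ (permissible since $T_{ij}$ is mean-zero for $i<j$); the resulting Rademacher process can then be bounded by Dudley/Seginer-type chaining, with the two terms in the claimed bound corresponding to the ``row-norm'' and ``column-sum'' contributions to the chaining integral. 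The second step is to pass to the high-probability statement via a Talagrand-type concentration inequality applied to the convex Lipschitz function $T\mapsto\snorm{T_J}_{\infty\to 2}$, whose Frobenius Lipschitz constant is $\sqrt{|J|}$ (with a truncation step first if the entries are unbounded). The hypothesis $|J|\ge n/2$ yields the variance proxy of order $n$ needed to give the failure probability $2\exp(-n)$.

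An alternative, more structural plan---and the main pitfall to be aware of---is to decompose $T_J=B_{11}+B_{12}+B_{22}$, where, for $U_1=\{1,\ldots,\lfloor n/2\rfloor\}$, $U_2=[n]\setminus U_1$, and $J_a=J\cap U_a$, the block $B_{ab}$ is the sub-matrix of $T_J$ supported on rows $U_a$ and columns $J_b$ (the $U_2\times J_1$ block vanishes by upper triangularity). These three blocks use disjoint entries of $T$, hence are independent, and moreover $B_{12}$ is fully i.i.d.~(since $i<j$ for every entry in its support), so \cref{lem:concentration-1} applies directly; the quantities $\mb{E}\snorm{B_{12}}_{2\to\infty}$ and $\mb{E}\snorm{B_{12}\mbf{1}}_2$ are dominated by $\mb{E}\snorm{T_J}_{2\to\infty}$ and a constant times $\mb{E}\snorm{T_J\mbf{1}}_2$ via sub-matrix inequalities. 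The remaining blocks $B_{11}$ and $B_{22}$ are smaller upper triangular matrices; a naive recursion on these incurs a $\sqrt{\log n}$-type factor loss from the geometric sum over recursion levels, which is why the direct chaining approach above is ultimately preferable for obtaining a bound whose constant $C_{\ref{lem:concentration-2}}$ is independent of $n$.
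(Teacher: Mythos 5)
The paper takes a genuinely different, and cleaner, route than your plan~1. Rather than re-opening the proof of RV18's Lemmas~6.3 and~6.4 as you propose, it treats \cref{lem:concentration-1} as a black box applied to the fully i.i.d.\ matrix $A$ and transfers its conclusion to $T_J$ by a comparison argument. The key observation is that $T_J = A_S$ for a set $S$ containing an $\lfloor n/4\rfloor\times\lfloor n/4\rfloor$ fully i.i.d.\ block (this is where $|J|\ge n/2$ enters). Combined with the monotonicity estimate \cref{lem:containment}, this shows that $\mb{E}\snorm{T_J}_{2\to\infty}$, $\mb{E}\snorm{T_J}_{\infty\to 2}$, and $\mb{E}\snorm{T_J\mbf{1}}_2$ are each within a constant factor of the corresponding quantities for $A$. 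The tail bound is then transferred by writing $A = T_J + A_{S'}$ with $S'$ disjoint from $S$, using the independence of the two pieces and Markov's inequality to get $\mb{P}[\snorm{T_J}_{\infty\to 2}\ge 2t]\le 2\,\mb{P}[\snorm{A}_{\infty\to 2}\ge t]$ once $t\gtrsim\mb{E}\snorm{A_{S'}}_{\infty\to 2}$, and finally invoking the high-probability bound of \cref{lem:concentration-1} for $A$.

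Your plan~1 is plausible but carries two gaps that the paper's route avoids. First, you inject a mean-zero hypothesis into the symmetrization step (``permissible since $T_{ij}$ is mean-zero for $i<j$''), whereas the lemma as stated assumes only that the entries above the diagonal are i.i.d., not mean zero; the paper's comparison handles an arbitrary mean automatically through Jensen's inequality inside \cref{lem:containment}. Second, your plan rests on the unverified assertion that RV18's chaining-plus-concentration argument ``applies essentially verbatim'' to independent but not identically distributed entries, together with an unelaborated truncation step to handle unbounded entries for the Talagrand step; these are precisely the line-by-line verifications that the paper's black-box comparison is designed to sidestep. What the paper's approach buys is modularity and robustness: it never needs to look inside RV18's proof, only at the statement. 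Your plan~2 correctly identifies the recursion pitfall, and the paper's comparison-to-a-single-i.i.d.-block argument is exactly the device that avoids any geometric accumulation over recursion levels.
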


During the course of our proof, we will make use of the following estimates regarding random matrix models of different ``shapes''.
\begin{lemma}\label{lem:containment}
Let $R\subseteq S\subseteq [n]\times [n]$. Then
\[\mb{E}\snorm{A_R}_{\infty\to 2}\le 2\mb{E}\snorm{A_S}_{\infty\to 2}\]
and 
\[\mb{E}\snorm{A_R\mbf{1}}_2\le 2\mb{E}\snorm{A_S\mbf{1}}_2.\]
\end{lemma}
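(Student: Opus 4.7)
The plan is to use a standard two-step symmetrization argument, exploiting the fact that both $M \mapsto \snorm{M}_{\infty \to 2}$ and $M \mapsto \snorm{M\mbf{1}}_2$ are convex and absolutely homogeneous functions of the matrix $M$. I will assume (consistent with how this lemma is applied in the proof of \cref{lem:concentration-2}) that $A$ has i.i.d.~entries with mean zero; the two inequalities have identical proofs, so I will only sketch the first.

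\medskip

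\noindent\textbf{Step 1 (decouple with an independent copy).} Let $A'$ be an independent copy of $A$. Since the entries have mean zero, $\mb{E}_{A'} A'_R = 0$, and thus $A_R = \mb{E}_{A'}[(A-A')_R]$. Applying Jensen's inequality to the convex map $M \mapsto \snorm{M}_{\infty \to 2}$ and taking an outer expectation over $A$ gives
\[
\mb{E}\snorm{A_R}_{\infty\to 2} \;=\; \mb{E}\snorm{\mb{E}_{A'}(A-A')_R}_{\infty \to 2} \;\le\; \mb{E}\snorm{(A-A')_R}_{\infty \to 2}.
\]

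\medskip

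\noindent\textbf{Step 2 (Rademacher sign flip).} The entries of $A-A'$ are i.i.d.~and \emph{symmetric}, so inserting independent signs on any subset of positions preserves the joint distribution. Let $(\epsilon_{ij})_{(i,j)\in S\setminus R}$ be i.i.d.~Rademacher variables independent of $A, A'$, and set $\epsilon_{ij} = 1$ for $(i,j) \in R$. Define
\[
B_{ij} \;=\; \epsilon_{ij}(A-A')_{ij}\mbm{1}_{(i,j) \in S}.
\]
Because the random signs on $S \setminus R$ average to zero while the fixed $+1$ signs on $R$ do nothing, $\mb{E}_{\epsilon} B = (A-A')_R$. Applying Jensen in $\epsilon$ and then using the sign-invariance to replace $B$ in distribution by $(A-A')_S$ yields
\[
\mb{E}\snorm{(A-A')_R}_{\infty \to 2} \;\le\; \mb{E}\snorm{B}_{\infty \to 2} \;=\; \mb{E}\snorm{(A-A')_S}_{\infty \to 2}.
\]

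\medskip

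\noindent\textbf{Step 3 (triangle inequality).} By sublinearity of the $\snorm{\cdot}_{\infty \to 2}$ norm and the fact that $A'$ is an independent copy of $A$,
\[
\mb{E}\snorm{(A-A')_S}_{\infty\to 2} \;\le\; \mb{E}\snorm{A_S}_{\infty\to 2} + \mb{E}\snorm{A'_S}_{\infty\to 2} \;=\; 2\mb{E}\snorm{A_S}_{\infty \to 2}.
\]
Chaining the three inequalities gives the desired bound. The $\snorm{A_R \mbf{1}}_2$ estimate is proved by exactly the same three steps, using that $M \mapsto \snorm{M\mbf{1}}_2$ is also convex and sublinear.

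\medskip

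There is no real obstacle here: the argument is a textbook symmetrization, and the only point requiring even mild care is verifying that the sign-flipped matrix $B$ in Step~2 indeed has the same distribution as $(A-A')_S$, which is immediate from the symmetry of the centered entries. The factor of $2$ in the conclusion comes cleanly from Step~3.
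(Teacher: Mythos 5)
Your argument is a valid and standard symmetrization, but as written it proves a strictly weaker statement than \cref{lem:containment}: Step~1 requires $\mb{E}_{A'}A'_R = 0$ and Step~2 requires that the entries of $A - A'$ be symmetric, which together force you to assume the entries of $A$ have mean zero. The lemma makes no such assumption, and the paper's proof explicitly sets $\mu = \mb{E}\xi$ precisely to handle nonzero mean. The paper's route is different and avoids symmetrization, independent copies, and Rademacher signs: it writes $A_R = A_R + \mb{E}[A_{S\setminus R} - \mb{E}A_{S\setminus R}]$, applies Jensen to pull the inner expectation outside the norm (thereby ``filling in'' the block $S\setminus R$), obtaining $\mb{E}\snorm{A_S - \mu\mbf{1}_{S\setminus R}}_{\infty\to 2}$, and then controls the deterministic correction by $\snorm{\mu\mbf{1}_{S\setminus R}}_{\infty\to 2}\le\snorm{\mu\mbf{1}_S}_{\infty\to 2} = \snorm{\mb{E}A_S}_{\infty\to 2}\le\mb{E}\snorm{A_S}_{\infty\to 2}$, the last step being Jensen once more; a triangle inequality then gives the factor $2$. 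In the centered case the two proofs are genuinely different routes to the same bound, and yours is the more ``textbook'' Gin\'e--Zinn symmetrization. For the purposes of the paper your version suffices, since in the actual application chain (\cref{lem:concentration-2} is invoked only inside \cref{lem:low-prob-2} and \cref{prop:restrictions}, which assume mean-zero entries, and ultimately the whole machinery is applied to the centered matrix $S - \mb{E}S$) the entries are always centered. But if you want the lemma exactly as stated, either adopt the paper's Jensen-only argument, or patch yours: running Steps~1--2 on $A - \mb{E}A$ gives $\mb{E}\snorm{(A-\mb{E}A)_R}_{\infty\to 2}\le\mb{E}\snorm{(A-A')_S}_{\infty\to 2}\le 2\mb{E}\snorm{A_S}_{\infty\to 2}$, and adding back $\snorm{(\mb{E}A)_R}_{\infty\to 2}\le\snorm{\mb{E}A_S}_{\infty\to 2}\le\mb{E}\snorm{A_S}_{\infty\to 2}$ yields the unconditional bound with constant $3$ in place of $2$, which is still serviceable.
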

\begin{proof}
Let $\mu = \mb{E}\xi$. We have
\begin{align*}
\mb{E}\snorm{A_R}_{\infty\to 2}& = \mb{E}\snorm{A_R + \mb{E}[A_{S\setminus R} - \mb{E}A_{S\setminus R}]}_{\infty \to 2}\\
&\le\mb{E}\snorm{A_R+A_{S\setminus R}-\mb{E}A_{S\setminus R}}_{\infty\to 2}\le\mb{E}\snorm{A_S}_{\infty\to 2}+\snorm{\mu\mbf{1}_{S\setminus R}}_{\infty\to 2}\\
&\le\mb{E}\snorm{A_S}_{\infty\to 2}+\snorm{\mu\mbf{1}_S}_{\infty\to 2} = \mb{E}\snorm{A_S}_{\infty\to 2}+\snorm{\mb{E}A_S}_{\infty\to 2}  \\
&\le 2\mb{E}\snorm{A_S}_{\infty\to 2},
\end{align*}
where we have used Jensen's inequality twice. An analogous proof establishes the second inequality as well. 
\end{proof}

We can now prove \cref{lem:concentration-2}.
\begin{proof}[Proof of \cref{lem:concentration-2}]
We may write $T_J = A_S$ for some  $S\subseteq[n]\times[n]$, where note that $S$ contains an $\lfloor n/4\rfloor\times\lfloor n/4\rfloor$ (not necessarily consecutive) block as a subset. Since zeroing out entries of a matrix cannot increase the $\snorm{\cdot}_{2 \to \infty}$ norm, it follows that 
\[\mb{E}\snorm{A}_{2\to\infty}\lesssim \mb{E}\snorm{A_{\lfloor n/4\rfloor\times\lfloor n/4\rfloor}}_{2\to\infty}\le\mb{E}\snorm{T_J}_{2\to\infty}\le\mb{E}\snorm{A}_{2\to\infty},\]
where the first inequality uses the triangle inequality along with the previously mentioned fact about zeroing out entries. 
Moreover, we have
\[\mb{E}\snorm{A}_{\infty\to 2} \lesssim \mb{E}\snorm{A_{\lfloor n/4\rfloor\times \lfloor n/4\rfloor}}_{\infty\to 2}\lesssim \mb{E}\snorm{T_J}_{\infty\to 2}\lesssim \mb{E}\snorm{A}_{\infty\to 2},\]
where the first inequality uses the triangle inequality and \cref{lem:containment} and the subsequent inequalities use \cref{lem:containment}.
Similarly,
\[\mb{E}\snorm{A\mbf{1}}_2 \lesssim \mb{E}\snorm{A_{\lfloor n/4\rfloor\times\lfloor n/4\rfloor}\mbf{1}}_2\lesssim\mb{E}\snorm{T_J\mbf{1}}_2\lesssim \mb{E}\snorm{A\mbf{1}}_2.\]
Next, write $A = T_J + A_{S'}$ for $S' = ([n]\times[n])\setminus S$. Then,
for $t\ge 2\mb{E}\snorm{A_{S'}}_{\infty\to 2}$, we have
\begin{align*}
\mb{P}[\snorm{A}_{\infty\to 2}\ge t]&\ge\mb{P}[\snorm{A_{S'}}_{\infty\to 2}\le t]\cdot \mb{P}[\snorm{T_J}_{\infty\to 2}\ge 2t]\\
&\ge\mb{P}[\snorm{T_J}_{\infty\to 2}\ge 2t]/2,
\end{align*}
where the second line follows from Markov's inequality.
Since 
\[\mb{E}\snorm{A_{S'}}_{\infty\to 2}\lesssim \mb{E}\snorm{A}_{\infty\to 2}\lesssim\sqrt{n}\mb{E}\snorm{A}_{2\to\infty}+\mb{E}\snorm{A\mbf{1}}_2\lesssim\sqrt{n}\mb{E}\snorm{T_J}_{2\to\infty}+\mb{E}\snorm{T_J\mbf{1}}_2,\]
where the first inequality is by \cref{lem:containment}, the second inequality is by \cref{lem:concentration-1}, and the third is by the previously established inequalities, we may choose 
\[t = C(\sqrt{n}\mb{E}\snorm{T_J}_{2\to\infty}+\mb{E}\snorm{T_J\mbf{1}}_2)\]
with $C$ is a large absolute constant guaranteeing that
\[t\ge (2 + C_{\ref{lem:concentration-1}} )\max(\sqrt{n}\mb{E}\snorm{A}_{2\to\infty}+\mb{E}\snorm{A\mbf{1}}_2,\mb{E}\snorm{A_{S'}}_{\infty\to 2}).\]
Finally, for such a choice of $C$ and $t$, we have by \cref{lem:concentration-1} that
\[\mb{P}[\snorm{T_J}_{\infty\to 2}\ge 2t] \le 2\mb{P}[\snorm{A}_{\infty\to 2}\ge t] \le 2e^{-n},\]
as desired. 
\end{proof}

With the preceding lemma in hand, we can prove the following, which shows that with at least some exponentially \emph{small} probability, the $\snorm{\cdot}_{2\to \infty}$ and $\snorm{\cdot}_{\infty \to 2}$ norms of the matrix are already regularized. 

\begin{lemma}\label{lem:low-prob-2}
Let $T$ be a random upper triangular $n\times n$ matrix with i.i.d.~entries of mean $0$ and variance at most $1$, and let $J$ be a subset of columns with $|J|\ge n/2$. For any $\delta\in(0,1/2)$,
\[\snorm{T_J}_{2\to\infty}\le C_{\ref{lem:low-prob-2}}\delta^{-1}\sqrt{n}\quad\emph{and}\quad\snorm{T_J}_{\infty\to 2}\le C_{\ref{lem:low-prob-2}}\delta^{-1}n\]
with probability at least $\exp(-\delta^2n)/2$.
\end{lemma}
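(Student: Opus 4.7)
The plan is to establish each of the two norm bounds separately and then combine them using a positive-correlation argument.

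For the $\snorm{T_J}_{2\to\infty}$ bound, the key observation is that since $T$ is upper triangular with i.i.d.\ entries above the diagonal, the rows of $T$ are independent. For each row $i$, the random variable $\sum_{j\in J}T_{ij}^2$ has expectation at most $n$, so Markov's inequality gives $\mb{P}[\sum_j T_{ij}^2 > C_1^2\delta^{-2}n] \leq \delta^2/C_1^2$. Row independence then yields
\[
\mb{P}\bigl[\snorm{T_J}_{2\to\infty}^2 \leq C_1^2 \delta^{-2} n\bigr] \geq (1-\delta^2/C_1^2)^n \geq \exp(-2\delta^2 n/C_1^2),
\]
which exceeds $\exp(-\delta^2 n/4)$ for $C_1$ a sufficiently large absolute constant.

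For the $\snorm{T_J}_{\infty \to 2}$ bound, the naive first-moment estimates $\mb{E}\snorm{T_J}_{2\to\infty},\mb{E}\snorm{T_J\mbf{1}}_2 \leq n$ feed into \cref{lem:concentration-2} to give only $\snorm{T_J}_{\infty\to 2} = O(n^{3/2})$, which is too weak by $\sqrt n$. The plan is to work with a truncation. Set $M := C_2\delta^{-1}\sqrt n$ and consider the event $F = \{|T_{ij}|\leq M \text{ for all }i,j\}$. By Markov on each $T_{ij}^2$ and independence of entries,
\[
\mb{P}[F] \geq (1-1/M^2)^{n^2/2} \geq \exp(-\delta^2 n / (2C_2^2)).
\]
Conditional on $F$, $T$ is distributed as an i.i.d.\ matrix $\wt{T}$ with bounded entries $|\wt{T}_{ij}|\leq M$, conditional variance at most $2$, and $|\mb{E}\wt{T}_{ij}|\leq 1/M$. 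Applying \cref{lem:concentration-2} to $\wt{T}$, with conditional probability at least $1-2e^{-n}$,
\[
\snorm{\wt{T}_J}_{\infty\to 2} \leq C\bigl(\sqrt n\,\mb{E}\snorm{\wt{T}_J}_{2\to\infty} + \mb{E}\snorm{\wt{T}_J\mbf{1}}_2\bigr).
\]
A straightforward variance computation gives $\mb{E}\snorm{\wt{T}_J\mbf{1}}_2 = O(n)$, while one aims for $\mb{E}\snorm{\wt{T}_J}_{2\to\infty} = O(\sqrt n)$ via Bernstein's inequality for each bounded row sum of squares, combined with a union bound over the $n$ rows tuned against $M$. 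Together these yield $\snorm{\wt{T}_J}_{\infty\to 2} = O(n) \leq O(\delta^{-1} n)$ on $F$.

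Both the event $E_1 = \{\snorm{T_J}_{2\to\infty}\leq C_1\delta^{-1}\sqrt n\}$ and $F$ are decreasing functions of the independent nonnegative variables $(|T_{ij}|)_{ij}$, so by Harris's inequality $\mb{P}[E_1 \cap F]\geq \mb{P}[E_1]\mb{P}[F]$. Choosing $C_1, C_2$ so that each factor is at least $\exp(-\delta^2 n/4)$ yields the desired bound $\exp(-\delta^2 n)/2$ (the conditional application of \cref{lem:concentration-2} on $F$ costs only a negligible factor $1-2e^{-n}$). The main obstacle will be the sharp bound $\mb{E}\snorm{\wt{T}_J}_{2\to\infty} = O(\sqrt n)$ without a logarithmic loss, since a naive union-bound of the Bernstein tail over rows, with the variance scale $M^2 = \delta^{-2}n$, naturally produces a $\sqrt{\log n}$ factor; controlling this sharply requires calibrating the truncation level $M$ against the Bernstein bound carefully, or replacing the invocation of \cref{lem:concentration-2} with a direct moment estimate for the truncated matrix.
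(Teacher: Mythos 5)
The paper's own proof is a one-line reference to the argument of {\cite[Lemma~6.5]{RV18}} with {\cite[Lemma~6.4]{RV18}} replaced by \cref{lem:concentration-2}; your proposal instead builds the argument from scratch via a per-entry truncation, Harris's inequality, and an application of \cref{lem:concentration-2} to the truncated matrix. The $2\to\infty$ part and the positive-correlation step are fine, but the $\infty\to 2$ part has a genuine gap which you correctly flag as the ``main obstacle,'' and I want to stress that this obstacle is not merely a technical nuisance one can calibrate away: it is fatal to the route you chose.

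The issue is that $\mb{E}\snorm{\wt{T}_J}_{2\to\infty}$ for the truncated matrix is \emph{not} $O(\delta^{-1}\sqrt n)$, let alone $O(\sqrt n)$, and no choice of truncation level $M$ can fix this while keeping $\mb{P}[F]\ge \exp(-\delta^2 n)/2$. Concretely, take the distribution $\xi=\pm M$ with probability $1/(2M^2)$ each and $0$ otherwise (mean $0$, variance $1$). For any $M\le C\delta^{-1}\sqrt n$ the truncation is vacuous, so $\wt T=T$, and the $i$-th row norm is $M\sqrt{N_i}$ where $N_i\sim\mathrm{Bin}(|J\cap\{j>i\}|,1/M^2)$ are independent across rows with $\mb{E}N_i=O(1)$. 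Over $n$ independent rows one has $\mb{E}\max_i N_i=\Theta(\log n/\log\log n)$, so
\[
\mb{E}\snorm{\wt T_J}_{2\to\infty}=M\,\mb{E}\max_i\sqrt{N_i}=\Theta\Big(\delta^{-1}\sqrt{n}\cdot\sqrt{\log n/\log\log n}\Big),
\]
and \cref{lem:concentration-2} applied to $\wt T$ yields only $\snorm{\wt T_J}_{\infty\to 2}=O\big(\delta^{-1}n\sqrt{\log n/\log\log n}\big)$. On the other hand, a smaller truncation level $M$ would require that every one of the $\Theta(n^2)$ upper-triangular entries lie in $[-M,M]$, and $(1-1/M^2)^{\Omega(n^2)}\ge\exp(-\delta^2n)/2$ forces $M\gtrsim\delta^{-1}\sqrt n$; so you cannot buy a better expectation bound by shrinking $M$. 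Thus neither of your two proposed fixes (recalibrating $M$, or a direct moment estimate for the truncated matrix, which faces the same $M\sqrt{\log n}$ obstruction via Bandeira--van Handel type bounds) closes the gap. Note that this matters in the regime actually used later: \cref{prop:restrictions} invokes the lemma with $\delta=1/4$, where the $\sqrt{\log n}$ loss is not absorbed.

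The correct argument has to exploit the low-probability nature of the target more directly, as the paper does via \cite[Lemma~6.5]{RV18}: one does \emph{not} pass to an entrywise-truncated matrix and apply the expectation-based concentration lemma to it, but rather combines the concentration lemma with a carefully chosen conditioning/modification so that the relevant expectations controlling the high-probability $\infty\to 2$ bound are taken over a distribution for which $\mb{E}\snorm{\cdot}_{2\to\infty}$ is genuinely $O(\sqrt n)$, at the acceptable price of an $\exp(-\Theta(\delta^2 n))$ event. As written, your argument establishes the $2\to\infty$ half but not the $\infty\to 2$ half.
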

\begin{proof}
The proof is identical to that of \cite[Lemma~6.5]{RV18} with the application of \cite[Lemma~6.4]{RV18} replaced by \cref{lem:concentration-2}.
\end{proof}




Finally, we need the following symmetrization estimate from \cite{RV18}.
\begin{lemma}[{Proof of \cite[Lemma~6.1]{RV18}}]
\label{lem:signs}
Let $A$ be an $n\times n$ matrix and let $\wt{A}$ denote the random matrix with entries $\wt{A}_{ij} = \epsilon_{ij}A_{ij}$, where $\epsilon_{ij}$ are i.i.d.~Rademacher random variables. There exists an absolute constant $c_{\ref{lem:signs}} > 0$ such that for all $t\ge 1$, with probability at least $1 - 2^n\exp(-c_{\ref{lem:signs}}tn)$, 
\[\snorm{\wt{A}}_{\infty \to 2} \le \sqrt{2tn} \snorm{A}_{2\to \infty}.\]
\end{lemma}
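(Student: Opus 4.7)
The plan is to replace the $\ell^\infty$ ball by its vertex set and then apply sub-Gaussian concentration at each fixed sign vector. Since $x\mapsto\snorm{\wt{A}x}_2$ is convex on $\mb{R}^n$ and the closed $\ell^\infty$ unit ball equals the convex hull of $\{-1,+1\}^n$,
\[\snorm{\wt{A}}_{\infty\to 2}=\max_{x\in\{-1,+1\}^n}\snorm{\wt{A}x}_2,\]
so it suffices to bound $\snorm{\wt{A}x}_2$ for a fixed sign vector $x$ and then union bound over the $2^n$ choices of $x$.

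\textbf{Fixed $x$ tail bound.} Set $\sigma:=\snorm{A}_{2\to\infty}$, and for each row $i$ write $Y_i:=(\wt{A}x)_i=\sum_j\epsilon_{ij}A_{ij}x_j$. Since $\epsilon_{ij}x_j$ is Rademacher and distinct rows use disjoint blocks of signs, the $Y_i$ are independent sub-Gaussians with $\mb{E}[\exp(\lambda Y_i)]\le\exp(\lambda^2\snorm{A_i}_2^2/2)\le\exp(\lambda^2\sigma^2/2)$ by Hoeffding's lemma. I would then invoke the standard sub-Gaussian-squared MGF estimate
\[\mb{E}[\exp(\lambda Y_i^2)]\le(1-2\lambda\sigma^2)^{-1/2}\quad\text{for }\lambda\in[0,1/(2\sigma^2)),\]
derived by writing $e^{\lambda y^2}=\mb{E}_g[e^{\sqrt{2\lambda}\,yg}]$ with $g\sim N(0,1)$ and applying Fubini, and multiply over the independent rows to obtain
\[\mb{E}\bigl[\exp\bigl(\lambda\snorm{\wt{A}x}_2^2\bigr)\bigr]\le(1-2\lambda\sigma^2)^{-n/2}.\]
Choosing $\lambda:=(2t-1)/(4t\sigma^2)$ in the Chernoff bound, a direct calculation gives, for every $t\ge 1$,
\[\mb{P}\bigl[\snorm{\wt{A}x}_2^2\ge 2tn\sigma^2\bigr]\le\exp\bigl(-\tfrac{n}{2}(2t-1-\log(2t))\bigr)\le\exp(-c_0 tn),\]
with $c_0:=(1-\log 2)/2>0$. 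The last step uses that $f(t):=(2t-1-\log(2t))/(2t)$ has derivative $\log(2t)/(2t^2)\ge 0$ on $[1,\infty)$ and satisfies $f(1)=c_0$.

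\textbf{Union bound and obstacle.} A union bound over the $2^n$ sign vectors $x\in\{-1,+1\}^n$, together with the max characterisation of $\snorm{\cdot}_{\infty\to 2}$, then yields
\[\mb{P}\bigl[\snorm{\wt{A}}_{\infty\to 2}\ge\sqrt{2tn}\,\sigma\bigr]\le 2^n\exp(-c_0 tn),\]
which is the statement with $c_{\ref{lem:signs}}=c_0$. There is no serious obstacle: the proof is a clean Bernstein-type bound for sums of independent squared sub-Gaussians, coupled with a cube discretisation of the $\ell^\infty$ ball. The only points requiring care are to propagate the $\sigma^2$ scaling through the MGF computation so that one ends up with $\sqrt{2tn}$ rather than a weaker factor, and to verify that the exponent $2t-1-\log(2t)$ is genuinely linear in $t$ in the regime $t\ge 1$, which is precisely the hypothesis of the lemma and explains why the result is not stated for $t<1$.
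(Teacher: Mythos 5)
Your proof is correct, and it is essentially the same argument as the one the paper points to in \cite[Lemma~6.1]{RV18}: reduce $\snorm{\cdot}_{\infty\to 2}$ to a maximum over the $2^n$ vertices of the $\ell^\infty$ ball using convexity, observe that for each fixed sign vector the row sums $Y_i = \sum_j \epsilon_{ij}A_{ij}x_j$ are independent sub-Gaussians with variance proxy $\snorm{A}_{2\to\infty}^2$, bound $\sum_i Y_i^2$ via a chi-square-type MGF estimate, and union bound over the $2^n$ sign vectors. The Gaussian linearization trick you use to get $\mb{E}[e^{\lambda Y_i^2}] \le (1-2\lambda\sigma^2)^{-1/2}$ and the explicit choice $\lambda = (2t-1)/(4t\sigma^2)$ with the monotonicity check on $t\mapsto(2t-1-\log(2t))/(2t)$ are all sound, and $c_{\ref{lem:signs}} = (1-\log 2)/2$ works.
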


We now have all the ingredients needed to prove \cref{prop:restrictions}. 

\begin{proof}[Proof of \cref{prop:restrictions}]
Fix $I \subseteq [n]$ with $|I|\ge n/2$, let $B = T_I$, and let $\wt{B} = B - B'$, where $B'$ denotes an independent copy of $T_I$. \
For $t\ge 1$ and $\delta \in (0,1/2)$, let 
\begin{align*}
    \mc{E}(t) &:= \{\snorm{\wt{B}}_{\infty \to 2} \le \sqrt{2tn}\snorm{\wt{B}}_{2\to \infty}\}, \\
    \mc{F}(\delta) &:= \{\snorm{B'}_{2\to \infty}\le C_{\ref{lem:low-prob-2}}\delta^{-1}\sqrt{n} \wedge \snorm{B'}_{\infty \to 2}\le C_{\ref{lem:low-prob-2}}\delta^{-1}n\}, \quad \text{ and }\\
    \mc{G}(t, \delta)&:= \{\snorm{B}_{\infty \to 2} \le 2\sqrt{2tn}\snorm{B}_{2\to \infty} + (2C_{\ref{lem:low-prob-2}}\delta^{-1}\sqrt{2t} +  C_{\ref{lem:low-prob-2}}\delta^{-1})n \}.
\end{align*}
Then, by the triangle inequality, we have that on the event $\mc{G}(t,\delta)^{c} \cap \mathcal{F}(\delta)$,
\begin{align*}
    \snorm{\wt{B}}_{2\to \infty} &\le \snorm{B}_{2\to \infty} + \snorm{B'}_{2 \to \infty} \le \snorm{B}_{2\to \infty} + C_{\ref{lem:low-prob-2}}\delta^{-1}\sqrt{n}, \quad \text{ and }\\
    \snorm{\wt{B}}_{\infty \to 2} &\ge \snorm{B}_{\infty \to 2} - \snorm{B'}_{\infty \to 2}\ge 2\sqrt{2tn}\bigg(\snorm{B}_{2\to \infty} + C_{\ref{lem:low-prob-2}}\delta^{-1}\sqrt{n}\bigg).
\end{align*}
In particular, this implies
\[\mc{G}(t,\delta)^{c} \cap \mc{F}(\delta) \subseteq \mc{E}^{c}(t).\]
Therefore, since $\mc{G}(t,\delta)$ and $\mc{F}(\delta)$ are independent, we have
\[\mb{P}[\mc{G}(t,\delta)^{c}]\le \frac{\mb{P}[\mc{E}^{c}(t)]}{\mb{P}[\mc{F}(\delta)]}\le \frac{2^n\exp(-c_{\ref{lem:signs}}tn)}{\exp(-\delta^2n)/2}, \]
where we have used \cref{lem:signs,lem:low-prob-2}. Finally, choosing $t = 1000/c_{\ref{lem:signs}}$, $\delta = 1/4$, and taking the union bound over at most $2^n$ choices of $I$, we obtain the desired conclusion.  
\end{proof}

\subsection{Operator norm}\label{sub:operator-norm}
Having thus established control over $\snorm{\cdot}_{\infty \to 2}$, we can apply a version of the Grothendieck-Pietsch theorem \cite[Proposition~15.11]{LT91} as in \cite{LLV17,RV18} to establish control over the operator norm. 
\begin{theorem}\label{thm:grothendieck-pietsch}
Let $B$ be a $k\times m$ real matrix and let $\delta > 0$. There exists $J\subseteq[m]$ with $|J|\le\delta m$ for which
\[\snorm{B_{J^c}}\le\frac{2}{\sqrt{\delta m}}\snorm{B}_{\infty\to 2}.\]
\end{theorem}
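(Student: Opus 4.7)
The plan is to prove \cref{thm:grothendieck-pietsch} by invoking the classical Pietsch factorization together with Grothendieck's inequality to reduce the problem to an averaging argument. The Grothendieck--Pietsch machinery produces a probability vector on the columns of $B$ that controls the matrix norm in an $L^{2}(p)$ sense, and the $\delta m$ columns carrying the largest $p_{j}$ are exactly the ones to remove.

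More precisely, the statement I would use (which is essentially the content of \cite[Proposition~15.11]{LT91}) is that for any $k\times m$ matrix $B$ there exists a probability vector $p = (p_{1},\dots,p_{m})$ with $p_{j}\ge 0$ and $\sum_{j}p_{j}=1$ such that
\[
\snorm{Bx}_{2} \le K\snorm{B}_{\infty\to 2}\bigg(\sum_{j=1}^{m}p_{j}x_{j}^{2}\bigg)^{1/2}\quad\text{for every }x\in\mb{R}^{m},
\]
where $K < 2$ is an absolute constant (e.g.\ the real Grothendieck constant satisfies $K_{G}^{\mb{R}} \le \pi/(2\ln(1+\sqrt 2)) < 2$). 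This factorization is the main non-elementary input, and I would invoke it as a black box. The standard proofs proceed either via a Hahn--Banach separation applied to the cone of positive quadratic forms, or equivalently via SDP duality: writing the conditions as a semidefinite program, one finds a nonnegative diagonal matrix $D$ dominating $B^{T}B$ with trace at most $K^{2}\snorm{B}_{\infty\to 2}^{2}$, and then $p_{j}:=D_{jj}/\operatorname{tr}(D)$. Nesterov's $\pi/2$ inequality for PSD quadratic forms gives $K = \sqrt{\pi/2}$ via this route, which is comfortably below $2$.

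Given such $p$, take $J\subseteq[m]$ to consist of the indices of the $\lceil\delta m\rceil$ largest values of $p_{j}$ (ties broken arbitrarily). Since $\sum_{j}p_{j}=1$, pigeonhole gives $\max_{j\in J^{c}}p_{j}\le 1/(\delta m)$. For any $y\in\mb{R}^{m}$ supported on $J^{c}$ with $\snorm{y}_{2}\le 1$ we have $B_{J^{c}}y = By$, hence the factorization yields
\[
\snorm{B_{J^{c}}y}_{2} = \snorm{By}_{2} \le K\snorm{B}_{\infty\to 2}\bigg(\sum_{j\in J^{c}}p_{j}y_{j}^{2}\bigg)^{1/2} \le \frac{K\snorm{B}_{\infty\to 2}}{\sqrt{\delta m}}.
\]
Taking the supremum over such $y$ and bounding $K<2$ produces the claimed inequality. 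The main obstacle is the factorization itself, which is classical but non-algorithmic; the column-removal step afterwards is essentially a one-line pigeonhole, and the constant $2$ in the theorem simply absorbs the Grothendieck/Nesterov constant.
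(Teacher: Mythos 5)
Your proposal is correct, and it is essentially the standard proof of this result: the paper itself does not prove \cref{thm:grothendieck-pietsch} but simply cites it as \cite[Proposition~15.11]{LT91}, and the argument you give (Pietsch factorization for $\ell_\infty^m\to\ell_2^k$ via the Little Grothendieck theorem, followed by a pigeonhole over the weights) is exactly the argument underlying that reference, with the correct constant since $\sqrt{\pi/2}<2$. One small slip worth fixing: you set $|J|=\lceil\delta m\rceil$, which can exceed $\delta m$ and so violate the size constraint when $\delta m\notin\mb{Z}$. Take instead $|J|=\lfloor\delta m\rfloor$, and note that after sorting $p_1\ge\cdots\ge p_m$ the $(|J|+1)$st largest weight satisfies $p_{|J|+1}\le 1/(|J|+1)\le 1/(\delta m)$, since $\lfloor\delta m\rfloor+1>\delta m$; this gives the same bound $\max_{j\in J^c}p_j\le 1/(\delta m)$ with $|J|\le\delta m$. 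In the degenerate case $\lfloor\delta m\rfloor=0$ (so $J=\emptyset$), the claim holds outright because $\snorm{B}\le\snorm{B}_{\infty\to 2}\le\frac{2}{\sqrt{\delta m}}\snorm{B}_{\infty\to 2}$ whenever $\delta m\le 4$, the first inequality using that the Euclidean ball is contained in the $\ell_\infty$ ball.
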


Now, applying \cref{lem:sym-2->infty,prop:restrictions,thm:grothendieck-pietsch} in sequence immediately yields the following result.
\begin{proposition}\label{prop:almost-surely-bounded}
Let $\epsilon \in (0,1/2)$ and consider an $n\times n$ upper triangular matrix $T$ with i.i.d.~mean $0$ entries of variance at most $1$, and such that $|T_{ij}|\le\sqrt{n/\log(1/\epsilon)}$ almost surely. Then with probability at least $1-2\exp(-c_{\ref{prop:almost-surely-bounded}}\epsilon n)$, there is a subset $J\subseteq[n]$ with $|J|\le C_{\ref{prop:almost-surely-bounded}}\epsilon n$ such that
\[\snorm{T_{J^c}}\le C_{\ref{prop:almost-surely-bounded}}\sqrt{\frac{n}{\epsilon}}.\]
\end{proposition}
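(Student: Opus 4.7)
The plan is to chain the three cited results directly, as the section heading suggests. First, apply \cref{lem:sym-2->infty} to $T$ to produce a set $J_1 \subseteq [n]$ with $|J_1| \le C_{\ref{lem:sym-2->infty}}\epsilon n$ and $\snorm{T_{J_1^c}}_{2\to\infty} \le C_{\ref{lem:sym-2->infty}}\sqrt{n}$, with probability at least $1-2\exp(-c_{\ref{lem:sym-2->infty}}\epsilon n)$; this is the only step that uses the almost sure bound $|T_{ij}|\le\sqrt{n/\log\epsilon^{-1}}$. We may assume $\epsilon$ is smaller than some absolute constant (otherwise $\sqrt{n/\epsilon}=\Omega(\sqrt n)$ and the statement follows from the standard bound $\snorm{T}=O(\sqrt n)$ with $J=\emptyset$), so that $|J_1^c|\ge n/2$.

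Next, invoke \cref{prop:restrictions} with $I = J_1^c$. Its conclusion, which holds with probability at least $1-4^{-n}$ \emph{uniformly} over all eligible $I$, combined with the output of the first step, yields
\[\snorm{T_{J_1^c}}_{\infty\to 2}\le C_{\ref{prop:restrictions}}\bigl(\sqrt n\cdot C_{\ref{lem:sym-2->infty}}\sqrt n+n\bigr)\lesssim n.\]

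Finally, view $T_{J_1^c}$ as an $n\times|J_1^c|$ matrix by discarding its zero columns, and apply \cref{thm:grothendieck-pietsch} with $\delta=\epsilon$ and $m=|J_1^c|\ge n/2$. This produces a subset $J_2\subseteq J_1^c$ with $|J_2|\le\epsilon|J_1^c|\le\epsilon n$ such that, setting $J:=J_1\cup J_2$,
\[\snorm{T_{J^c}}\le\frac{2}{\sqrt{\epsilon|J_1^c|}}\,\snorm{T_{J_1^c}}_{\infty\to 2}\lesssim\frac{n}{\sqrt{\epsilon n}}=\sqrt{n/\epsilon}.\]
Since $|J|\le(C_{\ref{lem:sym-2->infty}}+1)\epsilon n$, this is the desired conclusion. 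A union bound over the two stochastic events yields total failure probability at most $2\exp(-c_{\ref{lem:sym-2->infty}}\epsilon n)+4^{-n}$, and the second term is absorbed into $2\exp(-c_{\ref{prop:almost-surely-bounded}}\epsilon n)$ after adjusting constants, because $\epsilon\le 1/2$ implies $4^{-n}\le\exp(-(2\log 2)\,\epsilon n)$.

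There is essentially no obstacle here: the probabilistic content has already been packaged into \cref{lem:sym-2->infty,prop:restrictions,thm:grothendieck-pietsch}, and the proof reduces to the routine bookkeeping above, together with the (trivial) verification that $|J_1^c|\ge n/2$ so that \cref{prop:restrictions} is applicable to $I=J_1^c$.
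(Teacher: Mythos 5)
Your chaining of \cref{lem:sym-2->infty}, \cref{prop:restrictions}, and \cref{thm:grothendieck-pietsch} in that order is exactly the paper's intent (the paper states the proposition follows ``immediately'' from applying these three in sequence), and your bookkeeping is otherwise correct: you correctly exploit the fact that the event in \cref{prop:restrictions} is uniform over $I$ (essential, since $J_1$ depends on $T$), and the final union bound $2\exp(-c_{\ref{lem:sym-2->infty}}\epsilon n)+4^{-n}\le 2\exp(-c\epsilon n)$ is fine since $\epsilon<1/2$.

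The one flaw is the parenthetical dismissal of the regime $\epsilon>1/(2C_{\ref{lem:sym-2->infty}})$. Under the hypotheses actually in force---mean $0$, variance at most $1$, and $|T_{ij}|\le\sqrt{n/\log\epsilon^{-1}}$, which in this regime is merely $O(\sqrt{n})$---there is no ``standard bound'' giving $\snorm{T}=O(\sqrt{n})$ with probability $1-2\exp(-\Omega(n))$: Bai--Yin needs a fourth moment, and the bounded-entry hypothesis here is too weak. Concretely, take $T_{ij}=\pm\sqrt{n}$ each with probability $1/(2n)$ and $0$ otherwise. The number of nonzero entries in a fixed (early) row is approximately Poisson with constant mean, so with high probability some row has $\Theta(\log n/\log\log n)$ entries of magnitude $\sqrt{n}$, forcing $\snorm{T}\ge\snorm{T}_{2\to\infty}=\omega(\sqrt{n})$. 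Hence $J=\emptyset$ does not work in this regime, and reducing to a smaller $\epsilon_0$ does not work either, since the hypothesis $|T_{ij}|\le\sqrt{n/\log\epsilon_0^{-1}}$ would then be \emph{stronger} than what is assumed.

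The correct (and trivial) fix is to take $J=[n]$ when $\epsilon>1/(2C_{\ref{lem:sym-2->infty}})$: then $T_{J^c}=0$ deterministically, and $|J|=n\le 2C_{\ref{lem:sym-2->infty}}\epsilon n$, so the conclusion holds with probability $1$ once one sets $C_{\ref{prop:almost-surely-bounded}}\ge 2C_{\ref{lem:sym-2->infty}}$. With that patch the argument is complete and matches the paper's.
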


\section{Medium and Large Entries}\label{sec:operator-norm}
In this section, we handle the ``medium'' and ``large'' entries. Let $\epsilon \in (0,1/2)$, and recall that \cref{prop:almost-surely-bounded} takes care of those entries of the matrix $T$ which have absolute value at most $\sqrt{n/\log\epsilon^{-1}}$. We split the remaining entries of $T$ into three separate classes:
\begin{itemize}
\item $|T_{i,j}|\in\left(\sqrt{n/\log\epsilon^{-1}},\sqrt{n/(\epsilon \log^2\epsilon^{-1})}\right]$, \item $|T_{i,j}|\in\left(\sqrt{n/(\epsilon \log^2\epsilon^{-1})},5\sqrt{n/\epsilon}\right]$, and 
\item $|T_{i,j}| > 5\sqrt{n/\epsilon}$.  
\end{itemize}
The first and third classes are handled using arguments similar to \cite{RV18}. The second class requires a more intricate argument, and will be considered at the end of this section.

We will need an elementary lemma from \cite{RV18} which will allow us to combine different sub-matrices that we zero out into one. 
\begin{lemma}[{\cite[Lemma~8.7]{RV18}}]\label{lem:sub-matrix}
Let $M$ be an $n\times n$ matrix. Let $I,J\subseteq [n]$. Then 
\[\snorm{B-B_{I\times J}}\le\snorm{B_{I^c\times[n]}}+\snorm{B_{[n]\times J^c}}\le 2\snorm{B}.\]
\end{lemma}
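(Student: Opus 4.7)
The plan is to decompose $B - B_{I \times J}$ as a sum of two matrices with disjoint supports, apply the triangle inequality, and then invoke the elementary fact that zeroing out rows or columns of a matrix cannot increase its operator norm.

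First I would observe that an entry $(i,j)$ lies in the complement of $I \times J$ precisely when $i \in I^c$ or $j \in J^c$. Partitioning this complement as the disjoint union of $I^c \times [n]$ and $I \times J^c$, I can write
\[B - B_{I \times J} = B_{I^c \times [n]} + B_{I \times J^c},\]
so the triangle inequality yields $\snorm{B - B_{I \times J}} \le \snorm{B_{I^c \times [n]}} + \snorm{B_{I \times J^c}}$. The monotonicity fact I would then use is that if $M'$ is obtained from an arbitrary matrix $M$ by zeroing out a subset of rows (indexed, say, by $S^c$), then for every unit vector $x$,
\[\snorm{M'x}_2^2 = \sum_{i \in S} (Mx)_i^2 \le \snorm{Mx}_2^2 \le \snorm{M}^2,\]
and the symmetric statement for zeroing out columns follows by applying this to the transpose. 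Applying this to $B_{I \times J^c}$, viewed as obtained from $B_{[n] \times J^c}$ by zeroing out the rows in $I^c$, gives $\snorm{B_{I \times J^c}} \le \snorm{B_{[n] \times J^c}}$, which combined with the triangle inequality above delivers the first displayed inequality in the statement.

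For the second inequality, the same monotonicity principle applied directly to $B$ gives both $\snorm{B_{I^c \times [n]}} \le \snorm{B}$ (zero out the rows in $I$) and $\snorm{B_{[n] \times J^c}} \le \snorm{B}$ (zero out the columns in $J$), so their sum is at most $2 \snorm{B}$. There is no substantive obstacle here — the argument is an algebraic identity combined with a monotonicity observation; the only point requiring a moment of care is the disjoint decomposition of $(I \times J)^c$, where one must avoid double-counting the block $I^c \times J^c$, and the split into $I^c \times [n]$ and $I \times J^c$ is the natural way to do this.
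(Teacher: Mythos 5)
Your proposal is correct and takes essentially the same approach as the paper: decompose the support of $B - B_{I\times J}$ into the disjoint pieces $I^c\times[n]$ and $I\times J^c$, apply the triangle inequality, and then use twice that zeroing out rows or columns cannot increase the operator norm. The only difference is that you explicitly spell out the monotonicity fact, which the paper simply asserts.
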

\begin{proof}
Indeed,
\[\snorm{B-B_{I\times J}}\le\snorm{B_{I^c\times[n]}}+\snorm{B_{I\times J^c}}\le \snorm{B_{I^c\times[n]}}+\snorm{B_{[n]\times J^c}} \le 2\snorm{B},\]
where the first inequality is the triangle inequality, and the remaining inequalities use that zeroing out a subset of rows and/or columns cannot increase the operator norm. 
\end{proof}

We also require the following bound on the operator norm of the matrix in terms of the $\ell_1$ norms of its rows and columns, due to Schur \cite{Sch11}.
\begin{lemma}\label{lem:schur-bound}
For any matrix $A$,  
\[\snorm{A}\le \snorm{A}_{\infty\to 1}^{1/2}\snorm{A^T}_{\infty\to 1}^{1/2}.\]
\end{lemma}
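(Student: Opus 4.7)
The bound is the classical Schur test for operator norms, and the plan is to establish it by two applications of Cauchy--Schwarz. Fix $x\in\mb{S}^{n-1}$, and for each row index $i$ decompose each summand of $(Ax)_i = \sum_{j} A_{ij} x_j$ as $|A_{ij}|^{1/2}\cdot(\operatorname{sgn}(A_{ij})|A_{ij}|^{1/2}x_j)$. A single application of Cauchy--Schwarz then gives
\[|(Ax)_i|^2 \;\le\; \bigg(\sum_j |A_{ij}|\bigg)\bigg(\sum_j |A_{ij}|\,x_j^2\bigg) \;\le\; R\sum_j |A_{ij}|\,x_j^2,\]
where $R := \max_i\sum_j |A_{ij}|$ is the maximum row $\ell_1$-norm of $A$. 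Summing over $i$, interchanging the order of summation, and using $\sum_j x_j^2 = 1$ yields
\[\snorm{Ax}_2^2 \;\le\; R\sum_j x_j^2\sum_i |A_{ij}| \;\le\; R\cdot C,\qquad\text{where }C := \max_j\sum_i |A_{ij}|.\]
Taking square roots and supremizing over $x$ gives the classical Schur bound $\snorm{A}\le (RC)^{1/2}$.

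To recover the stated formulation, it then suffices to observe that $R$ equals the $\ell^\infty\!\to\!\ell^\infty$ operator norm of $A$ (achieved by sign-aligning $x$ with a maximal row), that $C$ equals the $\ell^\infty\!\to\!\ell^\infty$ operator norm of $A^T$, and that each of these is dominated by the corresponding $\ell^\infty\!\to\!\ell^1$ operator norm because $\snorm{y}_\infty\le\snorm{y}_1$ for every vector $y$. Chaining these inequalities gives $\snorm{A}\le\snorm{A}_{\infty\to 1}^{1/2}\snorm{A^T}_{\infty\to 1}^{1/2}$, which is the claimed bound. As this is a classical one-line estimate, I expect no substantive obstacle; the only bookkeeping is the identification of the row/column $\ell_1$-sums $R$ and $C$ with the operator norms appearing in the statement.
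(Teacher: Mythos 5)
Your proof is correct. The paper states \cref{lem:schur-bound} as a citation to Schur \cite{Sch11} with no in-text argument, so there is no proof to compare against; the double Cauchy--Schwarz computation you give is the standard derivation of the Schur test and is complete. One observation worth recording: your argument in fact establishes the sharper inequality $\snorm{A}\le\snorm{A}_{\infty\to\infty}^{1/2}\snorm{A^T}_{\infty\to\infty}^{1/2}$, where $\snorm{A}_{\infty\to\infty}=\max_i\sum_j|A_{ij}|$ and $\snorm{A^T}_{\infty\to\infty}=\max_j\sum_i|A_{ij}|$ are exactly the maximal row and column $\ell_1$-sums, and you then relax via $\snorm{y}_\infty\le\snorm{y}_1$ to match the $\snorm{\cdot}_{\infty\to 1}$ form in the statement. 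That sharper form is the one actually invoked in the proof of \cref{lem:medium-low}, where the input is a uniform $\ell_1$-norm bound of $O(\sqrt{n/\epsilon})$ on rows and columns, so keeping the $\infty\to\infty$ version in mind is the right way to read the lemma.
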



Now, we proceed to the treatment of the three classes of entries. The easiest is the third class, for which the result follows by a straightforward application of the Chernoff bound. 

\begin{lemma}[{Modification of \cite[Corollary~8.6]{RV18}}]\label{lem:large}
Let $\epsilon\in (0,1/2]$. Consider an $n\times n$ random matrix upper triangular matrix $T$ with independent (but not necessarily identically distributed) entries $T_{ij}$ such that $T_{ij} = 0$ or $|T_{ij}|\ge 5\sqrt{n/\epsilon}$, and which satisfy $\mb{E}T_{ij}^2\le 1$. Then with probability $1-\exp(-\epsilon n)$, all nonzero entries of $T$ are contained in an $\epsilon n \times \epsilon n$ matrix.
\end{lemma}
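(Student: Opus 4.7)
The plan is to argue directly via the second moment assumption and a Chernoff bound for sums of independent indicators. The key observation is that under the given hypothesis, nonzero entries are very rare individually, and in fact so rare that the total number of nonzero entries exceeds $\epsilon n$ only with exponentially small probability; once that holds, a trivial rows/columns containment argument finishes the claim.

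First I would estimate the per-entry nonzero probability. For each $(i,j)$, since $T_{ij} = 0$ or $|T_{ij}| \ge 5\sqrt{n/\epsilon}$, Markov's inequality on the second moment gives
\[
\mb{P}[T_{ij} \neq 0] \le \frac{\mb{E}[T_{ij}^2]}{25 n/\epsilon} \le \frac{\epsilon}{25 n}.
\]
Let $N$ denote the total number of nonzero entries in $T$. Since $T$ is upper triangular with independent entries, $N$ is a sum of at most $\binom{n}{2} \le n^2/2$ independent Bernoulli random variables, so $\mb{E} N \le \epsilon n/50$.

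Next I would apply the standard multiplicative Chernoff bound $\mb{P}[N \ge t] \le (e\,\mb{E}N/t)^t$ for $t \ge \mb{E}N$, with $t = \epsilon n$. This gives
\[
\mb{P}[N \ge \epsilon n] \le \left(\frac{e}{50}\right)^{\epsilon n} \le \exp(-\epsilon n),
\]
where the last inequality uses $e/50 \le 1/e$.

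Finally, the containment step is immediate: if $N \le \epsilon n$, then the set $R$ of rows containing a nonzero entry and the set $C$ of columns containing a nonzero entry each have size at most $\epsilon n$, so all nonzero entries of $T$ lie in $R \times C$, which is contained in an $\epsilon n \times \epsilon n$ submatrix. There is no real obstacle here beyond choosing the constant $5$ large enough to absorb the factor of $e$ from the Chernoff bound; that is precisely why the hypothesis is stated with the threshold $5\sqrt{n/\epsilon}$.
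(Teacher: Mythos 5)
Your proof is correct and is essentially the same argument the paper relies on (deferring to \cite[Corollary~8.6]{RV18}): Markov on the second moment shows each entry is nonzero with probability at most $\epsilon/(25n)$, so the number $N$ of nonzero entries has mean at most $\epsilon n/50$, a Chernoff bound gives $\mb{P}[N \ge \epsilon n] \le (e/50)^{\epsilon n} \le e^{-\epsilon n}$, and the rows/columns containment step is immediate. Nothing is missing, and your argument also makes it transparent why the i.i.d.\ assumption in \cite{RV18} can be relaxed to independence only, which is exactly the point of the paper's remark following the lemma.
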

\begin{remark}
In \cite{RV18}, the above lemma is stated for identically distributed entries but as can be seen from the proof, this assumption is unnecessary.
\end{remark}

Next, we handle entries in the first class. The following lemma, and its proof (which we include for completeness), are essentially identical to \cite[Proposition~8.4]{RV18}. 
\begin{lemma}
\label{lem:medium-low}
Let $\epsilon \in (0,1/2]$. Let $T$ be a random upper triangular $n\times n$ matrix with i.i.d.~entries of mean $0$ and variance at most $1$ such that $T_{ij} = 0$ or $|T_{ij}|\in[\sqrt{n/\log\epsilon^{-1}},\sqrt{n/(\epsilon\log^2\epsilon^{-1})}]$ almost surely. With probability at least $1-2\exp(-\epsilon n/4)$ we can zero out an $\epsilon n\times\epsilon n$ block to obtain $\wt{T}$ satisfying
\[\snorm{\wt{T}}\le C_{\ref{lem:medium-low}}\sqrt{\frac{n}{\epsilon}}.\]
\end{lemma}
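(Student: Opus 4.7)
The plan is to apply Schur's bound (\cref{lem:schur-bound}) to $\wt{T}:=T-T_{I_0\times J_0}$ for a carefully chosen $\epsilon n\times\epsilon n$ block $I_0\times J_0$, ensuring that every row and column $\ell_1$ sum of $\wt{T}$ is $O(\sqrt{n/\epsilon})$. The sparsity input is that $\mb{E}T_{ij}^2\le 1$ together with the lower bound $|T_{ij}|\ge\sqrt{n/\log\epsilon^{-1}}$ whenever $T_{ij}\neq 0$ forces $p:=\mb{P}[T_{ij}\neq 0]\le(\log\epsilon^{-1})/n$. Consequently the row nonzero counts $R_i:=\#\{j>i:T_{ij}\neq 0\}$ are independent $\mathrm{Binomial}(n-i,p)$ random variables (using that distinct rows of an upper triangular matrix occupy disjoint entries), each with mean at most $\log\epsilon^{-1}$; the column counts $C_j:=\#\{i<j:T_{ij}\neq 0\}$ are similarly independent.

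I will declare row $i$ \emph{bad} if $R_i>C_1\log\epsilon^{-1}$ and column $j$ \emph{bad} if $C_j>C_1\log\epsilon^{-1}$, for a large absolute constant $C_1$, and denote the corresponding sets $I,J\subseteq[n]$. Two Chernoff-type estimates are needed. The first, bounding $|I|,|J|\le\epsilon n/10$, follows from a standard Chernoff bound combined with $\mb{P}[R_i>C_1\log\epsilon^{-1}]\le\epsilon^{C'}$ for arbitrarily large $C'$ (tunable via $C_1$). The second---which is where I expect the main technical obstacle---is that $\sum_{i\in I}R_i\le\epsilon n/10$ despite an individual bad row potentially having up to $n$ nonzero entries. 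For this, I will bound the moment generating function of the truncated variable $\tau_i:=R_i\,\mbm{1}[R_i>C_1\log\epsilon^{-1}]$: direct computation using $\mb{P}[R_i\ge k]\le(e\log\epsilon^{-1}/k)^k$ gives $\mb{E}e^{\lambda\tau_i}\le 1+O(\epsilon^{C''})$ for a fixed $\lambda>0$ and arbitrarily large $C''$ (taking $C_1$ large so that $C_1\log(C_1/e^{\lambda+1})$ is large), and a Chernoff argument on the independent sum $\sum_i\tau_i$ then yields the desired tail. The bound on $\sum_{j\in J}C_j$ is identical. In total, all four events hold simultaneously with probability at least $1-2\exp(-\epsilon n/4)$ (for $\epsilon n$ large enough, else the stated conclusion is vacuous).

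Conditional on this event, I define the ``attendants'' $P_I:=\{j:T_{ij}\neq 0\text{ for some }i\in I\}$ and $P_J:=\{i:T_{ij}\neq 0\text{ for some }j\in J\}$, which satisfy $|P_I|\le\sum_{i\in I}R_i\le\epsilon n/10$ and likewise $|P_J|\le\epsilon n/10$. I then pick $I_0,J_0\subseteq[n]$ with $|I_0|=|J_0|=\epsilon n$, $I\cup P_J\subseteq I_0$, and $J\cup P_I\subseteq J_0$; this is feasible since each of $I\cup P_J$ and $J\cup P_I$ has size at most $\epsilon n/5$. Setting $\wt{T}:=T-T_{I_0\times J_0}$, I verify the row $\ell_1$ sums case by case: for $i\notin I_0$ (good row), the whole row survives and $\sum_j|T_{ij}|\le R_i\sqrt{n/(\epsilon\log^2\epsilon^{-1})}\le C_1\sqrt{n/\epsilon}$; for $i\in I$, all nonzero entries of row $i$ lie in $P_I\subseteq J_0$ and are therefore zeroed out, so the row $\ell_1$ sum is exactly $0$; for $i\in I_0\setminus I$ the row is still good and again contributes at most $C_1\sqrt{n/\epsilon}$. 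Column sums are handled symmetrically using $P_J\subseteq I_0$ to kill bad columns. Schur's bound (\cref{lem:schur-bound}) then yields $\snorm{\wt{T}}\le C_{\ref{lem:medium-low}}\sqrt{n/\epsilon}$, which is the required bound.
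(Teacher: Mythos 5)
Your proof is correct and follows the same high-level route as the paper: use Schur's bound (\cref{lem:schur-bound}) after finding an $\epsilon n \times \epsilon n$ block whose removal forces every row and column $\ell_1$ sum to be $O(\sqrt{n/\epsilon})$. The difference is that the paper imports the combinatorial removal step as a black box (\cite[Corollary~8.2]{RV18}, applied to the Bernoulli indicator matrix $B_{ij}=\mbm{1}_{T_{ij}\ne 0}$), whereas you prove an analogue of that corollary from scratch. Your version -- declare rows/columns with more than $C_1\log\epsilon^{-1}$ nonzeros ``bad,'' bound both the count $|I|$ of bad rows and the total mass $\sum_{i\in I}R_i$ landing in them via an MGF/Chernoff argument on the truncated variables $\tau_i=R_i\mbm{1}[R_i>C_1\log\epsilon^{-1}]$, and then zero out the block spanned by the bad rows/columns together with their ``attendant'' rows/columns -- is a correct and self-contained route. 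You rightly identify that controlling $\sum_{i\in I}R_i$ (not just $|I|$) is the delicate point, and your MGF bound $\mb{E}e^{\lambda\tau_i}\le 1+O(\epsilon^{C''})$ (with $C''$ made large via $C_1$) does close it. The case analysis on row/column $\ell_1$ sums of $\wt{T}=T-T_{I_0\times J_0}$ is also sound: good rows/columns are already $O(\sqrt{n/\epsilon})$ by the magnitude cap, and bad ones get completely zeroed because $P_I\subseteq J_0$, $P_J\subseteq I_0$.

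The one place you are a bit cavalier is asserting the precise exponent $1/4$ in the failure probability: your argument naturally yields $\exp(-c\epsilon n)$ with a $c$ determined by $\lambda$ and the geometry of the union bound, and matching $1/4$ exactly requires taking $\lambda\geq 10$ (say), which in turn forces $C_1\gtrsim e^{11}$ and blows up the final constant $C_{\ref{lem:medium-low}}$. This is fine since the lemma only claims absolute constants, but it is worth noting that the paper gets $1/4$ for free by citing \cite[Corollary~8.2]{RV18} rather than rederiving it, and the tradeoff in your version is a much larger $C_{\ref{lem:medium-low}}$. Aside from that constant-chasing remark, the argument is complete.
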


\begin{proof}
Let $B_{ij} = \mbm{1}_{T_{ij}\neq 0}$. Since the $B_{ij}$ are independent Bernoulli random variables with mean $p \le(\log\epsilon^{-1})/n$, it follows from \cite[Corollary~8.2]{RV18} that with probability at least $1-2\exp(-\epsilon n/4)$, we can find an $\epsilon n\times\epsilon n$ sub-matrix of $B$ to zero out so that the rows and columns of the resulting matrix have at most $O(\log\epsilon^{-1})$ ones each. Since $|T_{ij}|\le\sqrt{n/(\epsilon\log^2\epsilon^{-1})}$, this shows that we can find an $\epsilon n \times \epsilon n$ sub-matrix of $T$ to zero out so that the rows and columns of the resulting matrix have $\ell_1$ norm at most $O(\sqrt{n/\epsilon})$ each. An application of \cref{lem:schur-bound} gives the desired result.
\end{proof}

Finally, we handle entries in the second class. 

\begin{proposition}\label{prop:medium-high}
Let $\epsilon \in (0,1/2]$. Let $T$ be a random upper triangular $n\times n$ matrix with i.i.d.~entries of mean $0$ and variance at most $1$ such that $T_{ij} = 0$ or $|T_{ij}|\in[\sqrt{n/(\epsilon\log^2\epsilon^{-1})},5\sqrt{n/\epsilon}]$ almost surely. With probability at least $1-2\exp(-c_{\ref{prop:medium-high}}\epsilon n)$ we can zero out a square sub-matrix of size $C_{\ref{prop:medium-high}}\epsilon n \times C_{\ref{prop:medium-high}}\epsilon n$ to obtain $\wt{T}$ satisfying
\[\snorm{\wt{T}}\le C_{\ref{prop:medium-high}}\sqrt{\frac{n}{\epsilon}}.\]
\end{proposition}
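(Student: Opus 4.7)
The plan is to combine a dyadic decomposition of entries by magnitude with a per-scale row/column regularization, using \cref{lem:schur-bound} on the regularized pieces and \cref{lem:sub-matrix} to combine.

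First, partition the nonzero entries of $T$ dyadically by magnitude: let $M_k = 2^k\sqrt{n/(\epsilon\log^2\epsilon^{-1})}$ for $k = 0, 1, \ldots, K$ and let $T_k$ be the restriction of $T$ to entries of magnitude in $[M_k, 2M_k)$. Here $K = O(\log\log\epsilon^{-1})$; each $T_k$ has independent entries with variance at most one and bounded by $2M_k$, and the density at scale $k$ satisfies $p_k \le 1/M_k^2$. Separate the ``top'' scales with $M_k \ge \sqrt{n/\epsilon}$ (of which there are only $O(1)$) from the remaining ``low'' scales.

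For each top scale, the expected number of nonzero entries is $n^2 p_k \le n^2/M_k^2 = O(\epsilon n)$, and a Chernoff bound shows this holds up to a constant factor with probability $\ge 1 - 2\exp(-c\epsilon n)$. Collect the $O(\epsilon n)$ rows and $O(\epsilon n)$ columns touched by these entries into sets $I_{\mathrm{top}}, J_{\mathrm{top}}$. For each low scale $k$, apply a Chernoff tail bound on $\mathrm{Binomial}(n, p_k)$ to show that for a sufficiently large absolute constant $C$, the number of rows (resp.\ columns) containing more than $C$ scale-$k$ entries is at most $\epsilon n/K$ with the claimed probability; union-bounding over the $K$ low scales yields sets $I_{\mathrm{low}}, J_{\mathrm{low}}$ of total size $O(\epsilon n)$. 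Take $I = I_{\mathrm{top}} \cup I_{\mathrm{low}}$ and $J = J_{\mathrm{top}} \cup J_{\mathrm{low}}$, so $|I|, |J| = O(\epsilon n)$.

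After removing $I$ and $J$, the matrix $T_{I^c\times J^c}$ has no top-scale entries and at most $C$ entries per row and per column at each low scale. \cref{lem:schur-bound} then gives $\|(T_k)_{I^c\times J^c}\| \le CM_k$ for every low scale $k$, and summing the geometric sequence $\sum_k M_k = O(M_K)$ yields $\|T_{I^c\times J^c}\| = O(\sqrt{n/\epsilon})$ as desired.

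The main obstacle is passing from the bound on the doubly-regularized $\|T_{I^c\times J^c}\|$ to the desired bound on $\|T - T_{I\times J}\|$. \cref{lem:sub-matrix} reduces this to bounding the one-sided regularized terms $\|T_{I^c\times[n]}\|$ and $\|T_{[n]\times J^c}\|$, so I would further decompose $T_{I^c\times[n]} = T_{I^c\times J^c} + T_{I^c\times J}$ (and similarly for the other term). The first piece is handled by the scale-by-scale Schur argument above; the second $T_{I^c\times J}$ is supported on only $|J| = O(\epsilon n)$ columns, and the key observation is that each $j \in J$ is identified as heavy at just one specific scale (so only that scale can contribute an anomalously large column $\ell_1$), which combined with the per-scale row regularity on $I^c$ should again yield a scale-by-scale Schur-type estimate of the desired order $O(\sqrt{n/\epsilon})$, closing the argument.
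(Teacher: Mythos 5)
Your dyadic decomposition strategy is a genuinely different route from the paper's, and the bound it yields on the doubly-regularized piece $\snorm{T_{I^c\times J^c}}$ is plausible (with some care on the Chernoff/union-bound accounting across the $K=O(\log\log\epsilon^{-1})$ scales, since you need per-scale budgets and failure probabilities that both sum correctly — a geometric rather than an equal split of the budget $\epsilon n/K$ is advisable). The paper instead shows that removing just $O(\epsilon n)$ \emph{rows} already makes every remaining row \emph{and} every column contain at most a single nonzero entry, so the one-sided matrix $T_{I^c\times[n]}$ is a scaled partial permutation with norm $\le 5\sqrt{n/\epsilon}$, and the two one-sided bounds are then combined via \cref{lem:sub-matrix}. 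That one-sided formulation is exactly what sidesteps the issue you flag.

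The gap in your argument is in the final paragraph. To bound $\snorm{T_{I^c\times J}}$ by Schur (scale-by-scale or otherwise) you need control on the column $\ell_1$-norms of the columns in $J$, but those columns were removed precisely because they are anomalous, and nothing in your setup bounds how anomalous. Concretely: at scale $k$ the entries appear independently with probability $p_k\le 1/M_k^2$, so the probability a fixed column has $\ge m$ scale-$k$ entries is roughly $(np_k)^m/m!$; for any fixed or polylogarithmic $m$ this is much larger than $\exp(-\Omega(\epsilon n))$, so you cannot rule out a column with $\omega(1)$ entries at a scale within your failure budget. Thus the claim ``only that scale can contribute an anomalously large column $\ell_1$'' does not help: it is precisely at that scale that the $\ell_1$-norm is uncontrolled, and moreover a column can be heavy at several scales. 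What one actually needs is a bound on the \emph{total} number of entries lying in bad columns (then a Frobenius or row-removal argument finishes); establishing this at the $\exp(-\Omega(\epsilon n))$ probability level is the technical heart of the paper's proof — splitting the one-entry-per-row part $B^{(1)}$ into halves $B^{(\pm)}$ and coupling to an i.i.d.\ Bernoulli matrix $\wh{B}$ — and is absent from your sketch. Without that ingredient (or an equivalent), the passage from $\snorm{T_{I^c\times J^c}}$ to $\snorm{T - T_{I\times J}}$ remains open.
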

\begin{proof}
As always, we can assume $n = \Omega(\epsilon^{-1})$ since otherwise, the target probability is negative. Consider the matrix $B_{ij} = \mbm{1}_{T_{ij}\neq 0}$. This is an upper triangular matrix with i.i.d.~$\on{Ber}(p)$ entries, where $p\le(\epsilon\log^2\epsilon^{-1})/n$. We will show that with probability $1-2\exp(-c\epsilon n)$, one can remove $O(\epsilon n)$ rows of $B$ to obtain $B'$ such that each row and each column of $B'$ has at most a single entry equal to $1$. Since $|T_{ij}| \le 5\sqrt{n/\epsilon}$, this shows that the corresponding matrix
$T'$ will satisfy $\snorm{T'}\le 5\sqrt{n/\epsilon}$. By using the symmetry $(i,j) \mapsto (n-j, n-i)$, it follows that with probability $1-2\exp(-c\epsilon n)$, one can remove $O(\epsilon n)$ columns of $T$ to obtain $T''$ with $\snorm{T''} \le 5\sqrt{n/\epsilon}$.  
Finally, \cref{lem:sub-matrix} shows that the matrix $\wt{T}$ obtained by removing the intersection of these $O(\epsilon n)$ rows and $O(\epsilon n)$ columns will have operator norm bounded by $O(\sqrt{n/\epsilon})$ as desired.

Therefore, consider $B$ as above. Let $t$ denote the number of rows of $B$ with exactly one entry equal to $1$ and let $b$ denote the number of rows of $B$ with at least two entries equal to $1$. Also, let $B^{(1)}$ denote the $t\times n$ sub-matrix of $B$ in which each row has exactly one entry equal to $1$. Then, by the Chernoff--Hoeffding bound \cite[Theorem~1]{Hoe63}, except with probability at most $2\exp(-c\epsilon n\log^2\epsilon^{-1})$, we have $t \le t^*:= \lceil 2\epsilon n \log^2\epsilon^{-1} \rceil$ and $b = O(\epsilon n)$. Hence, with probability at least $1-2\exp(-c\epsilon n\log^2\epsilon^{-1})$, we have enough room to remove all rows of $B$ with at least two entries equal to $1$. 

It remains to deal with $B^{(1)}$. Let $B^{(-)}$ denote the sub-matrix of $B^{(1)}$ consisting of the first $n_-$ columns and $B^{(+)}$ denote the sub-matrix of $B^{(1)}$ consisting of the remaining $n_+$ columns, where $n_-$ and $n_+$ are chosen to be as close to each other as possible. We wish to bound the probability
\[\mb{P}[B^{(\pm)} \in \mc{B} \mid t]\]
uniformly for all $t \le t^*$, where $\mc{B}$ denotes the collection of $\{0,1\}$-valued $t\times n_\pm$ matrices for which the number of non-zero entries present in the union of all columns with at least two non-zero entries is not $O(\epsilon n)$. We present the bound for $B^{(-)}$, noting that the same argument also applies to $B^{(+)}$. Clearly, the probability is maximized for $t = t^*$. Moreover, since the rows of $B^{(-)}$ are independent and since each row has the distribution
\[
R \sim
\begin{cases}
\mbf{0}, \text{ w.p. } n_-/n,\\
e_i, \text{ w.p. } n_+/(n_-\cdot n)\text{ for all }i\in [n_-],
\end{cases}
\]
it follows that for all $t \le t^*$ and $n \ge 10$,
\[\mb{P}[B^{(-)} \in \mc{B} \mid t] \le \mb{P}[B^{(-)} \in \mc{B} \mid t^*] \le \mb{P}[\wh{B} \in \mc{B}],\]
where $\wh{B}$ denotes a random $t^* \times n_-$ matrix whose entries are i.i.d.~$\on{Ber}(2/n)$ random variables. We note that the above trick of splitting into $B^{(\pm)}$ and passing to the independent model is  closely related to the proof of \cite[Theorem~4.1]{Coh16}.

To bound $\mb{P}[\wh{B} \in \mc{B}]$, we note two things. First, by the Chernoff-Hoeffding bound, with probability at least $1-2\exp(-c\epsilon n)$, there are at most $s^* = \epsilon n/\log\epsilon^{-1}$ columns of $\wh{B}$ with at least two non-zero entries. Second, assuming $s^* \ge 1$ (otherwise, we are already done) by the Chernoff-Hoeffding bound and the union bound, the probability that \emph{any} $t^* \times \lfloor s^* \rfloor$ block of $\wh{B}$ has more than $O(\epsilon n)$ entries is at most $2\exp(-c\epsilon n)$. Combining these two facts shows that $\mb{P}[\wh{B} \in \mc{B}] \le 4\exp(-c\epsilon n)$.

To summarize, we have shown that except with probability $O(\exp(-c\epsilon n))$, the following hold simultaneously: $b = O(\epsilon n)$, $B^{(-)} \notin \mc{B}$, $B^{(+)} \notin \mc{B}$. On this event, we are guaranteed that there are $O(\epsilon n)$ rows of $B$ which either have at least two non-zero entries, or which contain a non-zero entry in a column of $B^{(\pm)}$ with at least two non-zero entries. Then, we can simply zero out all such rows to obtain the desired conclusion. \qedhere

\end{proof}

\section{Proof of \texorpdfstring{\cref{thm:optimal-iid-norm-regularization,thm:sym-norm-regularization,thm:algorithmic-norm-regularization}}{Theorems 1.2 to 1.4}}
\label{sec:proofs}
We are now in position to prove all our results.

\begin{proof}[Proof of \cref{thm:optimal-iid-norm-regularization,thm:sym-norm-regularization,thm:algorithmic-norm-regularization}]
We may assume that $n = \Omega(\epsilon^{-1})$ since otherwise, the desired success probability is negative and the statements are vacuously true. Moreover, it suffices to prove a version of \cref{thm:algorithmic-norm-regularization} for i.i.d.~random upper triangular matrices since this readily implies \cref{thm:optimal-iid-norm-regularization,thm:sym-norm-regularization,thm:algorithmic-norm-regularization}.
We decompose
\[T = S + M_1 + M_2 + L,\]
where $S$ contains the entries with magnitude at most $\sqrt{n/\log\epsilon^{-1}}$, $M_1$ contains the entries with magnitude in $$I_1 = (\sqrt{n/\log\epsilon^{-1}},\sqrt{n/(\epsilon\log^2\epsilon^{-1})}],$$ $M_2$ contains the entries with magnitude in $$I_2 = (\sqrt{n/(\epsilon\log^2\epsilon^{-1})},5\sqrt{n/\epsilon}],$$ and $L$ contains the entries with magnitude greater than $5\sqrt{n/\epsilon}$.

Since $(M_1)_{ij} = T_{ij}\mbm{1}_{|T_{ij}| \in I_1}$, we see that $M_1$ satisfies the assumptions of \cref{lem:medium-low}. Therefore, with probability at least $1-2\exp(-\epsilon n/4)$, we can find an $\epsilon n \times \epsilon n$ sub-matrix of $M_1$ to zero out so that the resulting matrix $\snorm{\wt{M_1}}$ has norm at most $C_{\ref{lem:medium-low}}\sqrt{n/\epsilon}$. Moreover, the proof of \cite[Corollary~8.2]{RV18} shows that this step can be done algorithmically. Indeed, one only needs to find the rows and columns of $M_1$ with more than $O(\log{\epsilon^{-1}})$ non-zero entries and then zero out \emph{any} $\epsilon n \times \epsilon n$ sub-matrix containing all the entries contained in such rows and columns. This last step is easy since \cite[Corollary~8.2]{RV18} guarantees that there are at most $\epsilon n$ such entries so that they can be trivially placed in an $\epsilon n \times \epsilon n$ sub-matrix.   

Similarly, we can apply \cref{prop:medium-high} to $M_2$ in order to find a suitable $\epsilon n\times\epsilon n$ sub-matrices to remove. In order to do this algorithmically, let $R_1$ denote the set of rows with exactly one non-zero entry, let $R_2$ denote the set of rows with at least two non-zero entries, let $C_2(R_1)$ denote the set of columns with at least two non-zero entries in rows in $R_1$, and let $R_1'$ denote the set of rows in $R_1$ such that the unique non-zero entry is in a column in $C_2(R_1)$. Define $C_1, C_2, R_2(C_1), C_1'$ similarly with the role of rows and columns interchanged. Then, the proof of \cref{prop:medium-high} shows that, with probability at least $1-2\exp(-c_{\ref{prop:medium-high}}\epsilon n)$, the intersection of the rows $R_2 \cup R_1'$ and the columns $C_2 \cup C_1'$ is contained in an $C_{\ref{prop:medium-high}}\epsilon n \times C_{\ref{prop:medium-high}}\epsilon n$ sub-matrix and zeroing it out suffices to regularize the norm of $M_2$. 

To regularize the norm of $L$, we can apply \cref{lem:large}. Again, this is algorithmic, since we only need to remove a sub-matrix containing all non-zero entries (of which there are at most $\epsilon n$ with probability at least $1-\exp(-\epsilon n)$, see the proof of \cite[Corollary~8.6]{RV18}).


Finally, we regularize the norm of $S$. Let $\xi' = \xi \mbm{1}_{|\xi| \le \sqrt{n/\log\epsilon^{-1}}}$ where $\xi$ denotes the common distribution of the upper triangular entries of $T$ and note that the upper triangular entries of $S$ are i.i.d.~copies of $\xi'$. Since $\mb{E}[\xi'^{2}] \le \mb{E}[\xi^{2}] \le 1$, we can apply \cref{lem:sym-2->infty} to algorithmically find (with probability at least $1-2\exp(-c_{\ref{lem:sym-2->infty}}\epsilon n)$ a set $J_0\subseteq [n]$ of $O(\epsilon n)$ columns such that
\[\snorm{S_{J_0^{c}}}_{2\to \infty} = O(\sqrt{n}).\]
In order to be able to apply \cref{prop:restrictions}, we need such a result not for $S$ but for $S - \mb{E}S$. For this, we note that
\begin{align*}
|\mb{E}\xi'| &= \Big|\mb{E}\xi\mbm{1}_{|\xi|\le\sqrt{n/\log\epsilon^{-1}}}\Big| = \Big|\mb{E}\Big[\xi - \xi\mbm{1}_{|\xi| > \sqrt{n/\log\epsilon^{-1}}}\Big]\Big|\\
&= \Big|\mb{E}\xi\mbm{1}_{|\xi| > \sqrt{n/\log\epsilon^{-1}}}\Big|  
\le(\mb{E}\xi^2)^{1/2}\mb{P}\Big[|\xi| > \sqrt{n/\log\epsilon^{-1}}\Big]^{1/2}
\le\sqrt{\frac{\log\epsilon^{-1}}{n}}.
\end{align*}
Therefore, for all $J \subseteq [n]$,
\[\snorm{(\mb{E}S)_J}_{2 \to \infty} \le \sqrt{\log{\epsilon^{-1}}}, \quad \snorm{(\mb{E}S)_J} \le \sqrt{n\log{\epsilon^{-1}}}.\]
In particular,
\[\snorm{(S - \mb{E}S)_{J_0^{c}}}_{2 \to \infty} = O(\sqrt{n} + \sqrt{\log{\epsilon^{-1}}}) = O(\sqrt{n}).\]
Combining this with \cref{prop:restrictions}, we see that except with probability at least $1-4^{-n} - 2\exp(-c_{\ref{lem:sym-2->infty}}\epsilon n)$,
\begin{equation}
\label{eqn:recentered-infinity-2}
\snorm{(S-\mb{E}S)_{J_0^{c}}}_{\infty \to 2} = O(n).
\end{equation}
Therefore, by \cref{thm:grothendieck-pietsch}, we see that there exists some $\mu$ with $|\mu| \le \sqrt{\log{\epsilon^{-1}}/n}$ and a subset of columns $J_1$ with $J_1 = O(\epsilon n)$ such that for $J_* = J_1 \cup J_0$, 
\[\snorm{(S - \mu \mbf{1}\mbf{1}^{T})_{J_*^{c}}} = O(\sqrt{n/\epsilon}).\]
This immediately implies that
\[\snorm{S_{J_*^{c}}} \le O(\sqrt{n/\epsilon}) + O(\sqrt{n\log{\epsilon^{-1}}}) = O(\sqrt{n/\epsilon}).\]
In order to find $\mu$ and $J_1$ algorithmically, it suffices to apply a deterministic version of \cref{thm:grothendieck-pietsch} due to Tropp \cite[Theorem~3.1]{Tro09} to the matrices
\[(S - \frac{j}{\sqrt{n}}\mbf{1}\mbf{1}^{T})_{J_0^{c}}, \quad j = 0,\pm 1,\pm 2,\dots, \pm\lceil\sqrt{\log{\epsilon^{-1}}}\rceil.\]
By \cref{eqn:recentered-infinity-2}, \cref{thm:grothendieck-pietsch}, and the bound on $|\mb{E}\xi'|$, this procedure is guaranteed to succeed whenever \cref{eqn:recentered-infinity-2} holds. We note that alternatively, one may proceed by first learning the mean $\mb{E}\xi'$ to within additive error $1/\sqrt{n}$ and then using the algorithm of \cite[Theorem~3.1]{Tro09} on the matrix $(S - \wh{\mb{E}S})_{J_0^{c}}$, where $\wh{\mb{E}S}$ is our approximation for $\mb{E}S$. 



Next, by symmetry, one may repeat this procedure to find a collection of $O(\epsilon n)$ rows $K_*$ such that 
\[\snorm{S_{K_*^{c}}} = O(\sqrt{n/\epsilon}).\]
By \cref{lem:sub-matrix}, it follows that zeroing out the $O(\epsilon n) \times O(\epsilon n)$ sub-matrix formed by the intersection of the rows in $K_*$ and columns in $J_*$ makes the norm of the resulting matrix $O(\sqrt{n/\epsilon})$. We now use \cref{lem:sub-matrix} to combine the sub-matrices zeroed out for $S, M_1, M_2, L$ and rescale $\epsilon$ to complete the proof. 

For the running time of the algorithm, the algorithmic Grothendieck-Pietsch factorization in \cite{Tro09} can be performed in time $\tilde{O}(n^{7/2})$ and everything else can clearly be performed in time $\tilde{O}(n^{2})$. \qedhere



\end{proof}

\bibliographystyle{amsplain0.bst}
\bibliography{main.bib}

\end{document}